\tikzset{
    vertex/.style = {
        circle,
        draw,
        outer sep = 3pt,
        inner sep = 3pt,
    },edge/.style = {->,> = latex'}
}
\def\m{\mathop{\rm max}}
\def\l{\mathop{\rm min}}
\def\rank{\mathop{\rm rank}}
\newcommand{\rr}{\mathbb{R}}
\newcommand{\1}{\mathbf{1}}
\newcommand{\E}{\mathcal{E}}
\def\det{{\rm det}}
\def\a{\alpha}
\newtheorem{theorem}{Theorem}
\newtheorem{example}{Example}
\newtheorem{lemma}{Lemma}
\begin{document}
\begin{center}
\begin{large}
Steiner distance matrix of caterpillar graphs
\end{large}
\end{center}
\begin{center}
Ali Azimi, R. B. Bapat and Shivani Goel\\
\today
\end{center}
\vskip .2cm
\begin{abstract}
For a connected graph $G:=(V,E)$, the Steiner distance
$d_G(X)$ among a set of vertices $X$ is the minimum size among
all the connected subgraphs of $G$ whose vertex set contains $X$. The $k-$Steiner distance matrix $D_k(G)$ of $G$ is a matrix whose rows and columns are indexed by $k-$subsets of $V$. For $k$-subsets $X_1$ and $X_2$, the $(X_1,X_2)-$entry of $D_k(G)$ is $d_G(X_1 \cup X_2)$. In this paper, we show that the rank of $2-$Steiner distance matrix of a caterpillar graph on $N$ vertices and with $p$ pendant veritices is $2N-p-1$. 
\end{abstract}

{\bf Keywords.} Steiner tree, caterpillar graph, determinant, schur complement, distance matrix, rank of a matrix, Laplacian matrices. \\

{\bf AMS CLASSIFICATION.} 05C50\\

{\bf MSC PRIMARY.} 05C05, 05C12, 05C50

\section{Introduction}
Let $G:=(V,E)$ be a connected graph with vertex set $V$ and edge set $E$. Suppose $u,v \in V$. The distance $d(u,v)$ between $u$ and $v$ is the length of the shortest path connecting $u$ and $v$. The distance $d(u,v)$ can be interpreted as the minimum size of a connected subgraph of $G$ containing both $u$ and $v$. This insight towards the distance between two vertices gives the motivation to extend the concept of distance between more than two vertices, known as Steiner distance. 

Let $X \subseteq V$, containing at least two vertices. The Steiner distance
$d_G(X)$ among the vertices of $X$ is the minimum size among
all the connected subgraphs of $G$ whose vertex set contains $X$. We refer to \cite{mao2017steiner} for more details on Steiner distance in graphs.
Let $k \geq 2$ be an integer. We define the $k-$Steiner distance matrix of $G$, denoted by $D_k(G)$ as follows: The rows and columns of $D_k(G)$ are indexed by $k-$subsets of $V$. Let $X_1$ and $X_2$ be two $k$-subsets of $V$. The $(X_1,X_2)-$entry of $D_k(G)$ is $d_G(X_1 \cup X_2)$. We illustrate the $k-$Steiner distance matrix by the following example.

\begin{example} \label{extree}\rm
Consider the following tree $T$ on $5$ vertices.
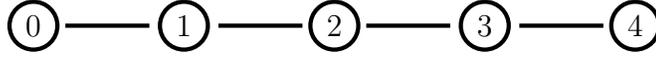
\begin{figure}[!h]
\centering
\begin{tikzpicture}[shorten >=1pt, auto, node distance=3cm, ultra thick,
   node_style/.style={circle,draw=black,fill=white !20!,font=\sffamily\Large\bfseries},
   edge_style/.style={draw=black, ultra thick}]
\node[vertex] (0) at  (0,0) {$0$};
\node[vertex] (1) at  (2,0) {$1$}; 
\node[vertex] (2) at  (4,0) {$2$};  
\node[vertex] (3) at  (6,0) {$3$}; 
\node[vertex] (4) at  (8,0) {$4$};  
\draw  (0) to (1);
\draw  (2) to (1);
\draw  (2) to (3);
\draw  (4) to (3);
\begin{scope}[dashed]
\end{scope}
\end{tikzpicture}
\caption{A tree $T$ on $5$ vertices} \label{fig_tree}
\end{figure}
Suppose the rows and columns of $D_2(T)$ are indexed in the following order:
\[\{0, 1\},
 \{0, 2\},
 \{0, 3\},
 \{0, 4\},
 \{1, 4\},
 \{2, 4\},
 \{3, 4\},
 \{1, 2\},
 \{1, 3\},
 \{2, 3\}\]
Then, the $2-$Steiner distance matrix of $T$ is 
\begin{equation*}\label{steinertree}
    D_2(T) = \left[\begin{array}{cccccccccccc}
1 & 2 & 3 & 4 & 4 & 4 & 4 & 2 & 3 & 3 \\
2 & 2 & 3 & 4 & 4 & 4 & 4 & 2 & 3 & 3 \\
3 & 3 & 3 & 4 & 4 & 4 & 4 & 3 & 3 & 3 \\
4 & 4 & 4 & 4 & 4 & 4 & 4 & 4 & 4 & 4 \\
4 & 4 & 4 & 4 & 3 & 3 & 3 & 3 & 3 & 3 \\
4 & 4 & 4 & 4 & 3 & 2 & 2 & 3 & 3 & 2 \\
4 & 4 & 4 & 4 & 3 & 2 & 1 & 3 & 3 & 2 \\
2 & 2 & 3 & 4 & 3 & 3 & 3 & 1 & 2 & 2 \\
3 & 3 & 3 & 4 & 3 & 3 & 3 & 2 & 2 & 2 \\
3 & 3 & 3 & 4 & 3 & 2 & 2 & 2 & 2 & 1\end{array}\right].
\end{equation*}
\end{example}

Throughout the paper, all vectors are considered as column vectors. For a matrix $A$ and a set of indices $X$, we will use $A[X,X]$ to denote the submatrix of $A$ determined by the rows and columns indexed by $X$. The cardinality of a set $X$ is represented by $|X|$. We will use $J$ to denote the matrix of all ones of appropriate order. For a matrix $A$, $A[i]$ will denote the $i^{\rm th}$ row of $A$. The notations $I$ and $\1$ will represent the identity matrix and all ones vector of the appropriate order, respectively. The degree of a vertex $\alpha$ in a graph is denoted by ${\rm deg} (\alpha)$. 
\subsection{Objective of the paper}
Let $G$ be a connected graph with vertex set $V:=\{0,1,\dotsc,n\}$. The Laplacian matrix of the graph $G$ is the matrix $L := (l_{ij})$, where
\[l_{ij} := \begin{cases}
~~\delta_i &~{\rm if}~i = j\\
-1 &~{\rm if}~i~{\rm and}~j~{\rm are~adjacent}\\
~~0&~{\rm otherwise}.
\end{cases}\]
Here, $\delta_i$ is the degree of the vertex $i$. Let $T$ be a tree with vertex set $\{1,2,\dotsc,n\}$ and distance matrix $D$. Suppose $L$ is the Laplacian of $T$. In \cite{Graham}, Graham and Lov\'asz, showed that 
\[D^{-1} = -\frac{1}{2}L+\frac{1}{2(n-1)}\tau \tau',\]
where $\tau:=(2-\delta_1,\dotsc,2-\delta_n)'$  and $\delta_i$ is equal to the degree of the vertex $i$. In the spirit of Graham and Lov\'asz formula, there are inverse formula for distance matrices of several other connected graphs, see \cite{bapat_kirk}, \cite{sivasu}, \cite{BALAJI2021274}, \cite{balaji2020distance} and \cite{GOEL202186}. We now list the objectives of the paper.
\begin{enumerate}
    \item Let $P_n$ be a path graph on $n+1$ vertices $\{0,1,\dotsc,n\}$ and let
\begin{equation*}\label{setX}
    X := \{\{0,\alpha\}|~\alpha \neq 0\} \cup \{\{\alpha,n\}|~{\rm \deg}(\alpha)\neq 1\}.
\end{equation*}
It is easy to note that $|X| = 2n-1$. 
Motivated by Graham and Lov\'asz formula, we first show that
\[D_2(P_n)[X,X]^{-1} =-L+\frac{1}{n}uu',\]
where $u := (0,\dotsc,0,\underbrace{1}_{n},0,\dotsc,0)' \in \rr^{2n-1}$ and $L$ is the Laplacian of a path graph on $2n-1$ vertices.  

\item A caterpillar graph is a tree in which all the vertices are within a distance $1$ from a central path (see figure \ref{fig_cater}). 
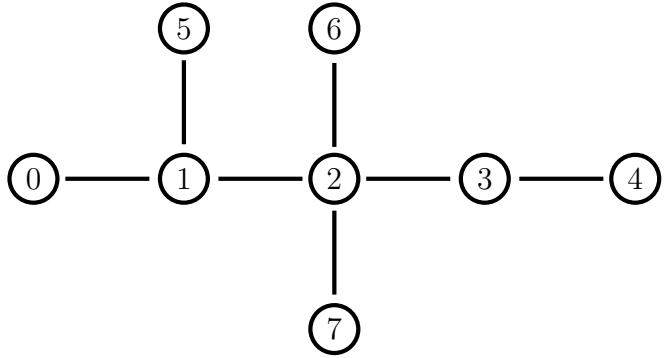
\begin{figure}[!h]
\centering
\begin{tikzpicture}[shorten >=1pt, auto, node distance=3cm, ultra thick,
   node_style/.style={circle,draw=black,fill=white !20!,font=\sffamily\Large\bfseries},
   edge_style/.style={draw=black, ultra thick}]
\node[vertex] (0) at  (0,0) {$0$};
\node[vertex] (1) at  (2,0) {$1$}; 
\node[vertex] (2) at  (4,0) {$2$};  
\node[vertex] (6) at  (4,2) {$6$};  
\node[vertex] (7) at  (4,-2) {$7$};  
\node[vertex] (3) at  (6,0) {$3$}; 
\node[vertex] (4) at  (8,0) {$4$};  
\node[vertex] (5) at  (2,2) {$5$};  
\draw  (0) to (1);
\draw  (2) to (1);
\draw  (5) to (1);
\draw  (2) to (6);
\draw  (2) to (7);
\draw  (2) to (3);
\draw  (4) to (3);
\begin{scope}[dashed]
\end{scope}
\end{tikzpicture}
\caption{A caterpillar graph} \label{fig_cater}
\end{figure}
Suppose $\mathcal{C}$ is a caterpillar graph with its central path of maximum length as $P_n$. We next show that, if 
 \begin{equation*}
    X := \{\{0,\alpha\}|~\alpha \neq 0\} \cup \{\{\alpha,n\}|~deg(\alpha)\neq 1\},
\end{equation*}
then the matrix $D_2(\mathcal{C})[X,X]$ is invertible. In our main result of this paper, we find the rank of $D_2(\mathcal{C})$.

\end{enumerate}

\section{Path graphs} 
Let $P_n$ denotes a path graph on $n+1$ vertices (see Figure \ref{fig_path}). 
\begin{figure}[!h]
\centering
\begin{tikzpicture}[shorten >=1pt, auto, node distance=3cm, ultra thick,
   node_style/.style={circle,draw=black,fill=white !20!,font=\sffamily\Large\bfseries},
   edge_style/.style={draw=black, ultra thick}]
\node[vertex] (0) at  (0,0) {$0$};
\node[vertex] (1) at  (2,0) {$1$}; 
\node[vertex] (2) at  (4,0) {$2$};  
\node[vertex] (n) at  (8,0) {$n$};  
\draw  (0) to (1);
\draw  (2) to (1);
\begin{scope}[dashed]
\draw (2) to (n);
\end{scope}
\end{tikzpicture}
\caption{$P_n$} \label{fig_path}
\end{figure}
Let $D_2(P_n)$ be the Steiner distance matrix of $P_n$ and let
$X := \{\{0,\alpha\}|~\alpha \neq 0\} \cup \{\{\alpha,n\}|~deg(\alpha)\neq 1\}$. In this section, we will deduce an inverse formula for $D_2(P_n)[X,X]$. The result proved in this section will be helpful in proving the main result of the paper. Suppose the sets in $X$ are indexed in the following order:
\[\{0,1\}, \cdots, \{0,n\},\{1,n\},\cdots,\{n-1,n\}.\]
We begin with a few observations on the matrix $D_2(P_n)[X,X]$.
\begin{enumerate}
    \item Let $X_1 = \{0,\alpha_1\}$, $X_2 = \{0,\alpha_2\}$, $X_3 = \{\alpha_3,n\}$ and $X_4 = \{\alpha_4,n\}$, where $0<\alpha_1,\alpha_2 \leq n$ and $0<\alpha_3,\alpha_4< n$ are pairwise distinct vertices of $P_n$. It is easy to see that 
    \[d_{P_n}(X_1 \cup X_2) = \m\{\alpha_1,\alpha_2\},\]
    \[d_{P_n}(X_3 \cup X_4) = n-\l\{\alpha_3,\alpha_4\},\]
and    
    \[d_{P_n}(X_1 \cup X_i) = n, ~{\rm for}~ i = 3,4. \]
    Using the above observation, the submatrix $D_2(P_n)[X,X]$ of $D_2(P_n)$ can be written in the following block form
\begin{equation*}\label{stmatrixpath}
    D_2(P_n)[X,X] = \left[\begin{array}{cc}
         S_1&  nJ_{n,n-1}\\
        nJ_{n-1,n}& S_2
    \end{array}\right],
\end{equation*}
where 
\[S_1 := \left[\begin{array}{ccccccccccccccccccccccc}
        1&2&3&4& \cdots& n\\
        2& 2& 3&4& \cdots& n\\
        3& 3& 3&4& \cdots& n\\
        \cdots& \\
        n& n&n& n& \cdots& n
    \end{array}\right]~\mbox{and}~ 
S_2 := \left[\begin{array}{ccccccccccccccccccccccc}
        n-1&n-1&n-1&\cdots& n-1\\
        n-1&n-2&n-2&\cdots& n-2\\
        n-1&n-2&n-3&\cdots& n-3\\
        \cdots&\\
        n-1&n-2&n-3&\cdots& 1
    \end{array}\right].\]
\item Using the above block form, we deduce expressions for rows of $D_2(P_n)[X,X]$ as follows. If $1\leq i \leq n$, then 
\begin{equation}\label{Di}
    D_2(P_n)[X,X][i]= (\underbrace{i,\dotsc,i}_{i},i+1,i+2,\dotsc,n,\dotsc,n),
\end{equation}
and for $n+1\leq k \leq 2n-1$
\begin{equation}\label{Di1}
    D_2(P_n)[X,X][k]= (\underbrace{n,\dotsc,n}_{n},n-1,n-2,\dotsc,\underbrace{2n-k,\dotsc,2n-k}_{2n-k}).
\end{equation}
\end{enumerate}

In the next two results, we deduce an inverse formula for $D_2(P_n)[X,X]$.
\begin{lemma}
Let $D:= D_2(P_n)[X,X]$. If $L$ is the Laplacian of a path graph on $2n-1$ vertices, then
\[LD+I = u\1',\]
where $u := (0,\dotsc,0,\underbrace{1}_{n},0,\dotsc,0)\in \rr^{2n-1}.$
\end{lemma}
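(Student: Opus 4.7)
The plan is to verify the identity row by row, using the explicit row expressions~(\ref{Di}) and~(\ref{Di1}) for $D:=D_2(P_n)[X,X]$. Since the Laplacian $L$ of the path on $2n-1$ vertices has diagonal entries $(1,2,\ldots,2,1)$ with $-1$ on the super- and sub-diagonal, the $i$-th row of $LD$ equals $D[1]-D[2]$ when $i=1$, equals $D[2n-1]-D[2n-2]$ when $i=2n-1$, and equals $2D[i]-D[i-1]-D[i+1]$ for every interior index $2\le i\le 2n-2$. The desired identity $LD+I=u\mathbf{1}'$ is equivalent to $(LD)[i]=-e_i'$ for all $i\neq n$ and $(LD)[n]=\mathbf{1}'-e_n'$, where $e_i$ denotes the $i$-th standard basis vector of $\rr^{2n-1}$.

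First I would dispatch the two endpoint cases $i=1$ and $i=2n-1$ directly: in each case the two consecutive rows of $D$ agree in every entry except the one indexed by the endpoint, producing a single $-1$ at position $i$. Next I would handle the bulk interior cases, splitting according to which of the two row-formula families the three rows $D[i-1]$, $D[i]$, $D[i+1]$ all belong to: when $2\le i\le n-1$ all three are given by~(\ref{Di}), and when $n+2\le i\le 2n-2$ all three are given by~(\ref{Di1}). In either range, for any column index $j$, the three entries appearing in $2D[i]-D[i-1]-D[i+1]$ either coincide or sit in a local arithmetic progression, so the combination telescopes to $0$ at every $j$ except $j=i$, where it evaluates to $-1$.

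The substantive cases are the two ``transition'' rows $i=n$ and $i=n+1$, where the rows $D[i-1]$, $D[i]$, $D[i+1]$ straddle the boundary between~(\ref{Di}) and~(\ref{Di1}). For $i=n+1$, using $D[n]=n\mathbf{1}'$ together with the first two forms from~(\ref{Di1}), one checks directly that the combination again collapses to $-e_{n+1}'$. The genuinely special case is $i=n$: here $D[n]=n\mathbf{1}'$, while $D[n-1]$ takes value $n-1$ on positions $1,\ldots,n-1$ and $n$ on positions $n,\ldots,2n-1$, and $D[n+1]$ takes value $n$ on positions $1,\ldots,n$ and $n-1$ on positions $n+1,\ldots,2n-1$. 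A straight entrywise computation shows that $2D[n]-D[n-1]-D[n+1]$ equals $1$ at every position except position $n$, where it equals $0$; this is precisely $\mathbf{1}'-e_n'$ and completes the identity.

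I do not anticipate any serious obstacle: the only delicate point is careful bookkeeping of position ranges across the transition rows $i=n$ and $i=n+1$, so as to avoid off-by-one errors at the interface between the two block structures of $D$.
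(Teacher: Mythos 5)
Your proposal is correct and follows essentially the same route as the paper: both expand each row of $LD$ via the tridiagonal structure of $L$, verify the endpoint rows, the two bulk ranges $2\le i\le n-1$ and $n+2\le i\le 2n-2$ using the two row formulas, and treat the transition rows $i=n$ and $i=n+1$ separately, with your entrywise computation of $2D[n]-D[n-1]-D[n+1]=\mathbf{1}'-e_n'$ matching the paper's case (iii).
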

\begin{proof}
The Laplacian matrix of a path graph is a tridiagonal matrix with its main diagonal determined by the vector $(1,2,\dotsc,2,1)$. The first diagonal above and below the main diagonal have all the entries equal to $-1$. Using this, we first note that
\[(LD)[1] = D[1]-D[2],\]
\[(LD)[i] = 2D[i]-D[i-1]-D[i+1],~\mbox{for}~2\leq i \leq 2n-2\]
and 
\[(LD)[2n-1] = D[2n-1]-D[2n-2].\]
We now compute exact expressions for the rows of $LD$ by using (\ref{Di}) and (\ref{Di1}), repeatedly. This is done by considering several cases as listed below:
\begin{enumerate}
    \item[(i)] It is easy to see that
\begin{equation*}\label{LDfirstrow}
    (LD)[1] = D[1]-D[2] = (-1,0,\dotsc,0).
\end{equation*}

\item[(ii)] For $2\leq i \leq n-1$
\begin{equation*}\label{LD2ton-1row}
    \begin{aligned}
    (LD)[i] &= 2D[i]-D[i-1]-D[i+1] \\
    &=2(\underbrace{i,\dotsc,i}_{i},i+1,i+2,\dotsc,n,\dotsc,n)\\
    &~~~-(\underbrace{i-1,\dotsc,i-1}_{i-1},i,i+1,i+2,\dotsc,n,\dotsc,n)\\
    &~~~-(\underbrace{i+1,\dotsc,i+1}_{i+1},i+2,\dotsc,n,\dotsc,n)\\
    &=({0,\dotsc,0},\underbrace{-1}_{i},0,\dotsc,0).
    \end{aligned}
\end{equation*}

\item[(iii)] Next, we note that
\begin{equation*}\label{LDnrow}
    \begin{aligned}
    (LD)[n] &= 2D[n]-D[n-1]-D[n+1] \\
    &=2n\1'-(\underbrace{n-1,\dotsc,n-1}_{n-1},n,\dotsc,n)-(\underbrace{n,\dotsc,n}_{n},n-1,\dotsc,n-1)\\
    &=({1,\dotsc,1},\underbrace{0}_{n},1,\dotsc,1),\\
    \end{aligned}
\end{equation*}
and 
\begin{equation*}\label{LDn+1row}
    \begin{aligned}
    (LD)[n+1] &= 2D[n+1]-D[n]-D[n+2] \\
    &=2(\underbrace{n,\dotsc,n}_{n},n-1,\dotsc,n-1)-n\1'\\
    &~~~-(\underbrace{n,\dotsc,n}_{n},n-1,{n-2,\dotsc,n-2})\\
    &=({0,\dotsc,0},\underbrace{-1}_{n+1},0,\dotsc,0).
    \end{aligned}
\end{equation*}
\item[(iv)]
Let $n+2 \leq k \leq 2n-2$. Then
\begin{equation*}\label{LDn+2to2n-2row}
    \begin{aligned}
    (LD)[k] &= 2D[k]-D[k-1]-D[k+1] \\
    &=2(\underbrace{n,\dotsc,n}_{n},n-1,n-2,\dotsc,\underbrace{2n-k,\dotsc,2n-k}_{2n-k}) \\
    &~~~~-(\underbrace{n,\dotsc,n}_{n},n-1,n-2,\dotsc,\underbrace{2n-k+1,\dotsc,2n-k+1}_{2n-k+1})\\
    &~~~~-(\underbrace{n,\dotsc,n}_{n},n-1,n-2,\dotsc,\underbrace{2n-k-1,\dotsc,2n-k-1}_{2n-k-1})\\
    &=(0,\dotsc,0,\underbrace{-1}_{k},0,0,\dotsc,0).
    \end{aligned}
\end{equation*}

\item[(v)] Finally
\begin{equation*}\label{LD2n-1row}
    \begin{aligned}
    (LD)[2n-1] &= D[2n-1]-D[2n-2]\\
    &=(\underbrace{n,\dotsc,n}_{n},n-1,n-2,\dotsc,1)-(\underbrace{n,\dotsc,n}_{n},n-1,n-2,\dotsc,3,2,2)\\
    &=(0,\dotsc,0,-1).
    \end{aligned}
\end{equation*}
 \end{enumerate}
From (i), (ii), (iii), (iv) and (v), we note that except the $n^{\rm th}$ row, all other rows of $LD$ are equal to the corresonding rows of the matrix $-I$. Hence $LD+I$ has all other rows except the $n^{\rm th}$ row as zero. Also, the $n^{\rm th}$ row of $LD+I$ is the all one vector. Thus 
\[LD+I = u\1',\]
where $u = (0,\dotsc,0,\underbrace{1}_{n},0,\dotsc,0)'.$ This completes the proof.
\end{proof}
\begin{theorem}\label{path}
Let $D=D_2(P_n)[X,X]$ and $L$ be the Laplacian of a path graph on $2n-1$ vertices. Then
\[D^{-1} =-L+\frac{1}{n}uu',\]
where $u = (0,\dotsc,0,\underbrace{1}_{n},0,\dotsc,0) \in \rr^{2n-1}.$
\end{theorem}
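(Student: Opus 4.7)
The plan is to build directly on the preceding lemma, which tells us that $LD + I = u\mathbf{1}'$, or equivalently $-LD = I - u\mathbf{1}'$. Since $D$ and $-L + \tfrac{1}{n}uu'$ are square matrices of the same order, it suffices to show that their product in one order is $I$; that will simultaneously prove invertibility of $D$ and give the claimed inverse formula.

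My approach is to compute $\bigl(-L + \tfrac{1}{n}uu'\bigr)D$ and massage it into the identity. Expanding,
\[
\bigl(-L + \tfrac{1}{n}uu'\bigr)D \;=\; -LD + \tfrac{1}{n}u\,(u'D) \;=\; I - u\mathbf{1}' + \tfrac{1}{n}u\,(u'D),
\]
where the last step uses the lemma. So the whole statement reduces to verifying the single identity $u'D = n\mathbf{1}'$.

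Now $u$ is the standard basis vector with a $1$ in position $n$, so $u'D$ is just the $n$-th row $D[n]$ of $D$. By formula~(\ref{Di}) with $i = n$, this row equals $(\underbrace{n,\dots,n}_{n}, n, \dots, n) = n\mathbf{1}'$, which is exactly what is needed. Plugging back,
\[
\bigl(-L + \tfrac{1}{n}uu'\bigr)D \;=\; I - u\mathbf{1}' + \tfrac{1}{n}u\cdot n\mathbf{1}' \;=\; I,
\]
and we are done.

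There is essentially no obstacle here beyond bookkeeping: the nontrivial work has already been absorbed into the preceding lemma, and the only new observation needed is the easy fact that row $n$ of $D$ is the constant row $n\mathbf{1}'$. The main thing to be careful about is simply recording that the one-sided inverse is a full inverse since $D$ is square, so the formula for $D^{-1}$ follows unambiguously.
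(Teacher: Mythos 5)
Your proposal is correct and follows essentially the same route as the paper: it invokes the preceding lemma to get $-LD = I - u\mathbf{1}'$, observes that $u'D = D[n] = n\mathbf{1}'$, and multiplies out $\bigl(-L+\tfrac{1}{n}uu'\bigr)D = I$. The only (welcome) addition is your explicit remark that a one-sided inverse of a square matrix is the full inverse.
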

\begin{proof}
Since the $n^{\rm th}$ row of $D$ is $n\1'$, it is easy to see that $u'D = n\1'$. Now 
\begin{equation*}
    \begin{aligned}
    (-L+\frac{1}{n}uu')D&=I-u\1'+\frac{1}{n}uu'D = I.
    \end{aligned}
\end{equation*}
Thus, $D$ is invertible and 
\[D^{-1} =-L+\frac{1}{n}uu'.\]
The proof is complete.
\end{proof}

\section{Caterpillar graphs} 
In this section, we first introduce caterpillar graphs and observe the structure of its $2-$Steiner distance matrix. Next, we compute the rank of its $2-$Steiner distance matrix by finding an invertible submatrix of maximum size. Let $\mathcal{C}$ be a caterpillar graph and let $P_n$ be a central path of maximum length in $\mathcal{C}$  (see Figure \ref{fig_caterpillar}). 
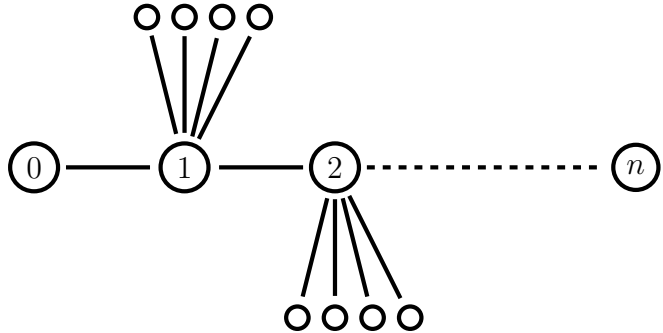
\begin{figure}[!h]
\centering
\begin{tikzpicture}[shorten >=1pt, auto, node distance=3cm, ultra thick,
   node_style/.style={circle,draw=black,fill=white !20!,font=\sffamily\Large\bfseries},
   edge_style/.style={draw=black, ultra thick}]
\node[vertex] (0) at  (0,0) {$0$};
\node[vertex] (1) at  (2,0) {$1$};
\node[vertex] (n+1) at  (2,2) {$ $};
\node[vertex] (n+2) at  (1.5,2) {$ $};
\node[vertex] (x) at  (2.5,2) {$ $};
\node[vertex] (y) at  (3,2) {$ $};
\node[vertex] (n+3) at  (4,-2) {$ $};
\node[vertex] (n+4) at  (3.5,-2) {$ $};
\node[vertex] (n+5) at  (4.5,-2) {$ $};
\node[vertex] (n+i2) at  (5,-2) {$ $};
\node[vertex] (2) at  (4,0) {$2$};  
\node[vertex] (n) at  (8,0) {$n$};  
\draw  (0) to (1);
\draw  (n+1) to (1);
\draw  (n+2) to (1);
\draw  (2) to (1);
\draw  (1) to (y);
\draw  (2) to (n+3);
\draw  (2) to (n+4);
\draw  (2) to (n+i2);
\draw  (1) to (x);
\draw  (2) to (n+5);
\begin{scope}[dashed]
\draw (2) to (n);
\end{scope}
\end{tikzpicture}
\caption{$\mathcal{C}$} \label{fig_caterpillar}
\end{figure}
Without loss of generality, we label the vertices of $\mathcal{C}$ in the following way:
\begin{enumerate}
    \item The vertices of $P_n$ are labeled from $0$ to $n$.
    \item The vertices adjacent to $1$ other than that of $P_n$ are labeled from $n+1,\dotsc, n+i_1$.
    \item The vertices adjacent to $2$ other than that of $P_n$ are labeled from $n+i_1+1,\dotsc, n+i_1+i_2$.
    \item Continuing like this, the vertices adjacent to $n-1$ other than that of $P_n$ are labeled from $n+\sum_{j=1}^{n-2}i_{j}+1,\dotsc, n+\sum_{j=1}^{n-1}i_{j}$.
\end{enumerate}
In the rest of the paper, we follow the above labelling for caterpillar graphs. We now illustrate the $2-$Steiner distance matrix of a caterpillar graph with an example.
\begin{example}\rm
Consider the caterpillar graph given in Figure \ref{fig_cater}. 
Suppose $D_2(\mathcal{C})$ denotes its $2-$Steiner distance matrix and let \[X= \{\{0,1\},\{0,2\},\{0,3\},\{0,4\},\{1,4\},\{2,4\},\{3,4\}, \{0,5\},\{0,6\}, \{0,7\}\}.\]
Then
\[D_2(\mathcal{C})[X,X] = \left[\begin{array}{cccccccccccc}
     1 & 2 & 3 & 4 & 4 & 4 & 4 & 2 & 3 & 3 \\
2 & 2 & 3 & 4 & 4 & 4 & 4 & 3 & 3 & 3 \\
3 & 3 & 3 & 4 & 4 & 4 & 4 & 4 & 4 & 4 \\
4 & 4 & 4 & 4 & 4 & 4 & 4 & 5 & 5 & 5 \\
4 & 4 & 4 & 4 & 3 & 3 & 3 & 5 & 5 & 5 \\
4 & 4 & 4 & 4 & 3 & 2 & 2 & 5 & 5 & 5 \\
4 & 4 & 4 & 4 & 3 & 2 & 1 & 5 & 5 & 5 \\
2 & 3 & 4 & 5 & 5 & 5 & 5 & 2 & 4 & 4 \\
3 & 3 & 4 & 5 & 5 & 5 & 5 & 4 & 3 & 4 \\
3 & 3 & 4 & 5 & 5 & 5 & 5 & 4 & 4 & 3
\end{array}\right].\]
We observe that the submatrix of $D_2(\mathcal{C})[X,X]$ corresponding to the first $7$ rows and columns is equal to the submatrix corresponding to the same rows and columns of the matrix $D_2(T)$ given in Example \ref{extree} .
\end{example}
\subsection{Block form for $D_2(\mathcal{C})[X,X]$}
We recall that $X = \{\{0,j\}|~j \neq 0\} \cup \{\{j,n\}|~deg(j)\neq 1\}.$ If we write elements of $X$ in the following order
\[X = \{\{0,1\}, \cdots, \{0,n\},\{1,n\},\cdots,\{n-1,n\}, \{0,n+1\},\{0,n+2\}, \dotsc, \{0,n+\sum_{j=1}^{n-1}i_{j}\}\},\]
then
\begin{enumerate}
    \item[(a)] If $D$ is the matrix given in Theorem $\ref{path}$, then $D$ is a principal submatrix of $D_2(\mathcal{C})[X,X]$. In fact, $D$ is the principal submatrix of $D_2(\mathcal{C})[X,X]$ corresponding to the first $2n-1$ rows and columns.
    \item[(b)] Let $X_1 = \{0,\a_1\}$ and $X_2 = \{0,\a_2\}$ be set of vertices of $\mathcal{C}$, where $1 \leq \a_1 \leq n$ and $\a_2 > n$ is adjacent to the vertex $\a$ of $P_n$. We note that
    \[d_{\mathcal{C}}(X_1 \cup X_2)= {\rm max}\{\a_1,\a\}+1.\]
    Suppose $X_3 = \{\a_3,n\}$, where $1 \leq \a_3 \leq n-1$. Then it is easy to see that
\[d_{\mathcal{C}}(X_3 \cup X_2)= n+1.\]
Now, let $X_4= \{0,\a_4\}$, where $\a_4 > n$ is adjacent to the vertex $\a'$ of $P_n$. Then
\[d_{\mathcal{C}}(X_2 \cup X_4) = \begin{cases}
\a+1&{\rm if}~\a_2=\a_4\\
\a+2&{\rm if}~\a_2\neq \a_4~{\rm but}~\a=\a'\\
{\rm max}\{\a,\a'\}+2&{\rm otherwise.}
\end{cases}\]

\item[(c)] Using observations (a) and (b), the submatrix $D_2(\mathcal{C})[X,X]$ of $D_2(\mathcal{C})$ can be written in the block form
    \begin{equation}\label{blockformC}
        D_2(\mathcal{C})[X,X] = \left[\begin{array}{cc}
    D & M \\
    M' & N
\end{array}\right],
    \end{equation}
where the matrix $M$ of order $(2n-1) \times \sum_{j=1}^{n-1} i_j$ and the matrix $N$ of order $\sum_{j=1}^{n-1} i_j \times \sum_{j=1}^{n-1} i_j$ are defined as follows:
For $1 \leq m \leq n-1$
\begin{equation}\label{Mrows}
    M[m]:=(m+1,\dotsc,m+1,\underbrace{m+2,\dotsc,m+2}_{i_{m+1}},\dotsc,\underbrace{n,\dotsc,n}_{i_{n-1}}),
\end{equation}
and for $n \leq m \leq 2n-1$
\begin{equation}\label{Mrow1}
    M[m] := (n+1) \1'.
\end{equation}
Before defining $N$, we note that any column of $N$ consist of the distance between the set $\{0,\alpha\}$, $\a>n$ and the sets
\[\{0,n+1\},\{0,n+2\}, \dotsc, \{0,n+\sum_{j=1}^{n-1}i_{j}\}.\]
Now, let $1 \leq m \leq \sum_{j=1}^{n-1} i_j$. For convenience, we assume $i_0 = 0$. Then there exists $k$, $1 \leq k \leq n-1$ such that $\sum_{j=0}^{k-1} i_j < m \leq \sum_{j=1}^{k} i_j$. Thus, the matrix $N$ is defined as follows:
\begin{equation}\label{N}
    \begin{aligned}
    m^{\rm th}~\mbox{column of}~ N &:=  (k+2,\dotsc,k+2,\underbrace{k+1}_{m^{\rm th}},k+2,\dotsc,k+2,\underbrace{k+3,\dotsc,k+3}_{i_{k+1}},\\
    &~~~~\dotsc ,\underbrace{n+1,\dotsc,n+1}_{i_{n-1}})'.
    \end{aligned}
\end{equation} 
\end{enumerate}

\subsection{Invertibility of $D_2(\mathcal{C})[X,X]$}
In this subsection, we show that $D_2(\mathcal{C})[X,X]$ is invertible. This implies ${\rank}(D_2(\mathcal{C})) \geq |X|$.
By Theorem \ref{path}, we know that $D$ is invertible and 
\[D^{-1} =-L+\frac{1}{n}uu',\]
where $L$ is the Laplacian matrix of a path graph on $2n-1$ vertices and $u = (0,\dotsc,0,\underbrace{1}_{n},0,\dotsc,0) \in \rr^{2n-1}$.
Suppose $P$ denotes the schur complement of $D$ in $D_2(\mathcal{C})[X,X]$. We know that
\begin{equation*}
\begin{aligned}
  P &= N-M'D^{-1}M  \\
  &= N-M'(-L+\frac{1}{n}uu')M \\
  &=N+M'LM-\frac{1}{n}M'uu'M.
\end{aligned}
\end{equation*}
Since $n^{\rm th}$ row of $M$ is $(n+1)\1'$, we conclude
that
\[u'M = (n+1)\1'.\]
Thus
\begin{equation}\label{P}
\begin{aligned}
  P =N+M'LM-\frac{(n+1)^2}{n}\1\1'.
\end{aligned}
\end{equation}
We claim that $D_2(\mathcal{C})[X,X]$ is invertible. Since $D$ is invertible, it is enough to show that $P$ is invertible. In the subsequent lemmas, we compute an exact expression for $P$.

\begin{lemma}\label{LM}
Suppose $M$ is the matrix given in the block form (\ref{blockformC}) of $D_2(\mathcal{C})[X,X]$ and $L$ is the Laplacian matrix of a path graph on $2n-1$ vertices. Then
\begin{equation*}
    (LM)[m] = \begin{cases}
    (0,\dotsc,0,\underbrace{-1,\dotsc,-1}_{i_{m}},0,\dotsc,0)&~\mbox{if}~1 \leq m \leq n-1\\
    (1, \dotsc,1)&~\mbox{if}~m=n\\
    (0,\dotsc,0)&\mbox{otherwise.}
    \end{cases}
\end{equation*}
\end{lemma}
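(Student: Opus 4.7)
The plan is to mimic the row-by-row computation that was used in the previous lemma for $LD$. Since $L$ is the tridiagonal Laplacian of $P_{2n-1}$, we have
\[
(LM)[1]=M[1]-M[2],\quad (LM)[m]=2M[m]-M[m-1]-M[m+1]\ (2\le m\le 2n-2),\quad (LM)[2n-1]=M[2n-1]-M[2n-2],
\]
so the whole computation reduces to understanding the block-structure of $M[m]$ described in (\ref{Mrows}) and (\ref{Mrow1}) and taking second differences along the row index $m$.

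First I would fix the column partition once and for all: the columns of $M$ are grouped into blocks $B_1,B_2,\dots,B_{n-1}$ of sizes $i_1,i_2,\dots,i_{n-1}$, where $B_k$ consists of the columns indexed by pairs $\{0,\alpha\}$ with $\alpha>n$ adjacent to the path-vertex $k$. From observation (b) in Section~3.1, the entry of $M$ in row $m$ (with $1\le m\le n-1$) and a column in $B_k$ equals $\max\{m,k\}+1$, while for $n\le m\le 2n-1$ every entry of $M[m]$ equals $n+1$. This is exactly the content of (\ref{Mrows}) and (\ref{Mrow1}) reorganised by column-block.

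Next I would go through six cases, in each case computing the contribution block $B_k$ by block $B_k$. For $2\le m\le n-2$ and a block $B_k$, the relevant entries of $M[m-1],M[m],M[m+1]$ are $\max\{m-1,k\}+1,\max\{m,k\}+1,\max\{m+1,k\}+1$; taking the second difference gives $0$ when $k\ne m$ and $-1$ when $k=m$, which is precisely the claim. The endpoint cases are handled similarly: $m=1$ gives a first difference that is $-1$ on $B_1$ and $0$ elsewhere; $m=n-1$ uses $M[n-1]=n\mathbf{1}'$ and $M[n]=(n+1)\mathbf{1}'$ and again yields $-1$ on $B_{n-1}$ and $0$ elsewhere; $m=n$ gives $2(n+1)\mathbf{1}'-n\mathbf{1}'-(n+1)\mathbf{1}'=\mathbf{1}'$; and for $n+1\le m\le 2n-2$ as well as for $m=2n-1$ all three rows involved equal $(n+1)\mathbf{1}'$, so the combination is zero.

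There is no real obstacle here; the whole argument is a bookkeeping exercise in second differences of the function $k\mapsto \max\{m,k\}+1$ and a handful of easy boundary checks at $m=1,n-1,n,2n-1$. The only point where one must be a little careful is the transition row $m=n-1\to n$, because the formula for $M[m]$ changes form there, but both expressions reduce to constant row-vectors ($n\mathbf{1}'$ and $(n+1)\mathbf{1}'$), which makes the computation of $(LM)[n-1]$ and $(LM)[n]$ immediate.
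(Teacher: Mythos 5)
Your proposal is correct and follows essentially the same route as the paper: both reduce $(LM)[m]$ to first/second differences of the rows of $M$ via the tridiagonal structure of $L$ and then check the cases $m=1$, $2\le m\le n-2$, $m=n-1$, $m=n$, $n+1\le m\le 2n-2$, $m=2n-1$. Your reformulation of the block entries as $\max\{m,k\}+1$ is a slight streamlining of the paper's explicit row expressions, but the computation is the same.
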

\begin{proof}
We recall that $L$ is a tridiagonal matrix with its main diagonal determined by the vector $(1,2,\dotsc,2,1)$ and its first diagonal above and below the main diagonal have all the entries equal to $-1$. Using this observation, it is easy to deduce that
\[(LM)[1] = M[1]-M[2],\]
\[(LM)[m] = 2M[m]-M[m-1]-M[m+1],~\mbox{for}~2\leq m \leq 2n-2\]
and
\[(LM)[2n-1] = M[2n-1]-[2n-2].\]
Now, we compute $LM$ by repeatedly using (\ref{Mrows}) and (\ref{Mrow1}) in the following cases.
\begin{enumerate}
    \item[(i)] It is easy to see that
    \[(LM)[1] = M[1]-M[2] = (\underbrace{-1,\dotsc,-1}_{i_1},0,\dotsc,0).\]
\item[(ii)] Suppose $2 \leq m \leq n-2$. Then
\begin{equation*}
    \begin{aligned}
(LM)[m] &= 2M[m]-M[m-1]-M[m+1] \\
&= 2(m+1,\dotsc,m+1,\underbrace{m+2,\dotsc,m+2}_{i_{m+1}},\dotsc,\underbrace{n,\dotsc,n}_{i_{n-1}})    \\
&~~~~-({m,\dotsc,,m},\underbrace{m+1,\dotsc,m+1}_{i_{m}},\dotsc,\underbrace{n,\dotsc,n}_{i_{n-1}})\\
&~~~~-({m+2,\dotsc,m+2},\underbrace{m+3,\dotsc,m+3}_{i_{m+2}},\dotsc,\underbrace{n,\dotsc,n}_{i_{n-1}})\\
&= (0,\dotsc,0,\underbrace{-1,\dotsc,-1}_{i_{m}},0,\dotsc,0).
    \end{aligned}
\end{equation*}

\item[(iii)] Next,
\begin{equation*}
    \begin{aligned}
(LM)[n-1] &= 2M[n-1]-M[n-2]-M[n] \\
&= 2n\1'-({n-1,\dotsc,n-1},\underbrace{n,\dotsc,n}_{i_{n-1}})-(n+1) \1'\\
&=(0,\dotsc,0,\underbrace{-1,\dotsc,-1}_{i_{n-1}}),
    \end{aligned}
\end{equation*}
and
\begin{equation*}
    \begin{aligned}
(LM)[n] &= 2M[n]-M[n-1]-M[n+1] \\
&= 2(n+1) \1' - n\1'-(n+1) \1'\\
&=\1'.
    \end{aligned}
\end{equation*}

\item[(iv)]Suppose $n+1 \leq m \leq 2n-2$. Since $    M[m] = (n+1) \1'$, we have
\[(LM)[m] = 2M[m]-M[m-1]-M[m+1] =(0,\dotsc,0).\]
\item[(v)] Finally, 
\[(LM)[2n-1] = M[2n-1]-[2n-2] = (0,\dotsc,0).\]
\end{enumerate}
From (i), (ii), (iii), (iv) and (v), we deduce
\begin{equation*}
    (LM)[m] = \begin{cases}
    (0,\dotsc,0,\underbrace{-1,\dotsc,-1}_{i_{m}},0,\dotsc,0)&~\mbox{if}~1 \leq m \leq n-1\\
    (1, \dotsc,1)&~\mbox{if}~m=n\\
    (0,\dotsc,0)&\mbox{otherwise}
    \end{cases}
\end{equation*}
The proof is complete.
\end{proof}

\begin{lemma}\label{MLM}
Suppose $M$ is the matrix given in the block form (\ref{blockformC}) of $D_2(\mathcal{C})[X,X]$ and $L$ is the Laplacian matrix of a path graph on $2n-1$ vertices. If $1 \leq m \leq \sum_{j=1}^{n-1} i_j$ such that $\sum_{j=0}^{k-1} i_j < m \leq \sum_{j=1}^{k} i_j$, for some $1 \leq k \leq n-1$, then
\begin{equation*}
    m^{\rm th}~\mbox{column of}~ M'LM = (n-k,\dotsc,n-k,\underbrace{n-k-1,\dotsc,n-k-1}_{i_{k+1}},\dotsc,\underbrace{1,\dotsc,1}_{i_{n-1}})'.
\end{equation*}
\end{lemma}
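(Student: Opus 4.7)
The plan is to pass directly through the previous lemma rather than computing $M'LM$ from scratch. Lemma \ref{LM} describes the rows of $LM$, and each row has a very sparse support: row $k$ for $1\le k\le n-1$ has $-1$'s in precisely the positions corresponding to columns of $M$ whose indexing pendant vertex is adjacent to vertex $k$ of $P_n$, while row $n$ is the all-ones vector and every other row of $LM$ is zero. Reading this row-by-row description column-by-column, one sees immediately that for any $m$ with $\sum_{j=0}^{k-1}i_j < m \le \sum_{j=1}^{k}i_j$, the $m^{\rm th}$ column of $LM$ is exactly $e_n-e_k$, where $e_i$ denotes the $i^{\rm th}$ standard basis vector of $\rr^{2n-1}$.

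Having made this observation, the computation of the $m^{\rm th}$ column of $M'LM$ reduces to
\[
M'(e_n-e_k) \;=\; (M[n])'-(M[k])'.
\]
Both terms on the right-hand side are given explicitly by (\ref{Mrows}) and (\ref{Mrow1}): $M[n]=(n+1)\1'$ is constant, and $M[k]$ is the row whose entries, read from left to right in the column-ordering inherited from the block form (\ref{blockformC}), are $k+1$ for each of the first $\sum_{j=1}^{k}i_j$ columns (those indexed by pendants adjacent to $1,2,\dots,k$), then $k+2$ repeated $i_{k+1}$ times, and so on up to $n$ repeated $i_{n-1}$ times.

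Subtracting entry by entry, the constant value $n+1$ minus these blocks yields $n-k$ on the first $\sum_{j=1}^{k}i_j$ coordinates, then $n-k-1$ repeated $i_{k+1}$ times, $n-k-2$ repeated $i_{k+2}$ times, and finally $1$ repeated $i_{n-1}$ times. Transposing gives the column claimed in the lemma.

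There is no real obstacle here; once Lemma \ref{LM} has been invoked to collapse $LM$ to the rank-at-most-one structure $\1 e_n' - \sum_{k}(\text{indicator of pendants at }k)\,e_k'$, the remainder is bookkeeping against the explicit descriptions of $M[k]$ and $M[n]$. The only point requiring modest care is keeping the column-blocks of $M$ indexed by $i_1,\dots,i_{n-1}$ aligned with the corresponding $-1$ positions in the rows of $LM$, which is precisely how the index $k$ attached to $m$ enters the final formula.
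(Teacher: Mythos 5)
Your proof is correct and follows essentially the same route as the paper: both identify the $m^{\rm th}$ column of $LM$ as $e_n-e_k$ via Lemma \ref{LM} and then compute $M'(e_n-e_k)=M[n]'-M[k]'$ entrywise from (\ref{Mrows}) and (\ref{Mrow1}). (Your parenthetical description of $LM$ as a ``rank-at-most-one'' matrix is a slip --- it is a sum of $n$ rank-one terms --- but this does not affect the argument.)
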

\begin{proof}
Let $1 \leq m \leq \sum_{j=1}^{n-1} i_j$. Then there exists $k$, $1 \leq k \leq n-1$ such that $\sum_{j=0}^{k-1} i_j < m \leq \sum_{j=1}^{k} i_j$. Using Lemma \ref{LM}, (\ref{Mrows}) and (\ref{Mrow1}), we have
\begin{equation*}\label{MLM2}
    \begin{aligned}
    m^{\rm th}~\mbox{column of}~ M'LM &=-k^{\rm th}~\mbox{ column of }~M'+n^{\rm th}~\mbox{ column of }~M' \\ &=-M[k]'+M[n]' \\
    &=-({k+1,\dotsc,k+1},\underbrace{k+2,\dotsc,k+2}_{i_{k+1}},\dotsc,\underbrace{n,\dotsc,n}_{i_{n-1}})'+(n+1) \1\\
    &=(n-k,\dotsc,n-k,\underbrace{n-k-1,\dotsc,n-k-1}_{i_{k+1}},\dotsc,\underbrace{1,\dotsc,1}_{i_{n-1}})'.
    \end{aligned}
\end{equation*}
This completes the proof.
\end{proof}

\begin{lemma}\label{NMLM}
Suppose $M$ and $N$ are the matrices given in the block form (\ref{blockformC}) of $D_2(\mathcal{C})[X,X]$ and $L$ is the Laplacian matrix of a path graph on $2n-1$ vertices. Then
\[N+M'LM = (n+2)J-I.\]
\end{lemma}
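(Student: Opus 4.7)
The plan is a purely bookkeeping argument: add the explicit column formulas for $N$ (given in (\ref{N})) and for $M'LM$ (given in Lemma \ref{MLM}) position-by-position, and observe that the block boundaries agree so that the sums collapse to the same constant $n+2$ off the diagonal and to $n+1$ on the diagonal.

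More concretely, I would fix an index $m$ with $\sum_{j=0}^{k-1} i_j < m \leq \sum_{j=1}^{k} i_j$ for some $1 \leq k \leq n-1$, and read off the $m^{\rm th}$ column of $N$ and of $M'LM$ in parallel. Both columns are partitioned into the same blocks: a first block of length $\sum_{j=1}^{k} i_j$, followed by blocks of lengths $i_{k+1}, i_{k+2}, \dotsc, i_{n-1}$. In the first block, $N$ contributes $k+2$ in every position except the $m^{\rm th}$ where it contributes $k+1$, while $M'LM$ contributes $n-k$ throughout; in the $\ell^{\rm th}$ block beyond that (corresponding to pendants at vertex $k+\ell$), $N$ contributes $k+\ell+2$ and $M'LM$ contributes $n-k-\ell$.

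Adding these values block-by-block yields $n+2$ in every row of the $m^{\rm th}$ column of $N + M'LM$ except the $m^{\rm th}$ row, where the sum is $(k+1) + (n-k) = n+1$. Since this holds for every column index $m$, we conclude
\[
N + M'LM = (n+2)J - I,
\]
as claimed.

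I do not expect a serious obstacle here: the computation is a matter of carefully aligning the block structure of the two column formulas and checking the diagonal entry separately. The only place where one has to be a little careful is in confirming that the lengths of the leading blocks match, i.e.\ that the $n-k$ block in $M'LM$ occupies exactly the first $\sum_{j=1}^k i_j$ rows (which is precisely where the special $m^{\rm th}$ entry of $N$ lives). Once that alignment is observed, the identity falls out immediately.
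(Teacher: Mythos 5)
Your proposal is correct and follows essentially the same route as the paper: the paper's proof of Lemma \ref{NMLM} likewise adds the $m^{\rm th}$ column of $N$ from (\ref{N}) to the $m^{\rm th}$ column of $M'LM$ from Lemma \ref{MLM}, using exactly the block alignment you describe (both leading blocks have length $\sum_{j=1}^{k} i_j$, so the exceptional entry $k+1$ pairs with $n-k$ to give $n+1$ on the diagonal, and all other positions sum to $n+2$). No gap.
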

\begin{proof}
Let $1 \leq m \leq \sum_{j=1}^{n-1} i_j$. Then there exists $k$, $1 \leq k \leq n-1$ such that $\sum_{j=0}^{k-1} i_j < m \leq \sum_{j=1}^{k} i_j$. Using Lemma \ref{MLM} and (\ref{N}), we have
\begin{equation*}
    \begin{aligned}
    m^{\rm th}~\mbox{column of}~ (N+M'LM) &= (k+2,\dotsc,k+2,\underbrace{k+1}_{m^{\rm th}},k+2,\dotsc,k+2,\underbrace{k+3,\dotsc,k+3}_{i_{k+1}},\\
    &~~~\dotsc,\underbrace{n+1,\dotsc,n+1}_{i_{n-1}})'\\
    &~~~+ (n-k,\dotsc,n-k,\underbrace{n-k-1,\dotsc,n-k-1}_{i_{k+1}},\dotsc,\underbrace{1,\dotsc,1}_{i_{n-1}})'\\
    &= (n+2,\dotsc,n+2,\underbrace{n+1}_{m^{\rm th}},n+2,\dotsc,n+2)'\\
    &= (n+2)\1-e_m.
    \end{aligned}
\end{equation*}
Here, $e_m \in \rr^{\sum_{j=1}^{n-1} i_j}$ with $1$ at its $m^{\rm th}$ position and $0$ elsewhere. Thus
\[N+M'LM = (n+2)J-I,\]
and the proof is complete.
\end{proof}

In the next theorem, we will prove the main result of this subsection.
\begin{theorem}\label{caterpillardet}
The matrix $D_2(\mathcal{C})[X,X]$ is invertible.
\end{theorem}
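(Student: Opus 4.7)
The plan is to invoke the Schur complement criterion: since $D$ is invertible by Theorem \ref{path}, the block matrix $D_2(\mathcal{C})[X,X] = \left[\begin{smallmatrix} D & M \\ M' & N \end{smallmatrix}\right]$ is invertible if and only if its Schur complement $P = N - M'D^{-1}M$ is. The preceding three lemmas, together with equation (\ref{P}), have already performed the main combinatorial calculation, so what remains is essentially an arithmetic collapse followed by an easy spectral check.

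First I would substitute $N + M'LM = (n+2)J - I$ from Lemma \ref{NMLM} and $\1\1' = J$ into equation (\ref{P}) to obtain
\[
P = (n+2)J - I - \frac{(n+1)^2}{n} J = \frac{n(n+2) - (n+1)^2}{n}\, J - I = -\frac{1}{n} J - I,
\]
using the identity $n(n+2) - (n+1)^2 = -1$. So $P$ reduces to the very simple form $-I - \tfrac{1}{n}J$, independent of the pendant parameters $i_1,\dots,i_{n-1}$ beyond setting the order of $P$.

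Second I would verify that this matrix is invertible. Writing $s := \sum_{j=1}^{n-1} i_j$ for the order of $P$, a matrix of the form $aI + bJ$ on $\rr^s$ has eigenvalues $a$ (with multiplicity $s-1$, on the orthogonal complement of $\1$) and $a + sb$ (simple, with eigenvector $\1$). With $a = -1$ and $b = -\tfrac{1}{n}$, the spectrum of $P$ is $\{-1,\, -1 - s/n\}$, both nonzero. Hence $P$ is invertible, and by the Schur complement criterion $D_2(\mathcal{C})[X,X]$ is invertible as well.

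There is no real obstacle at this stage; the substantive work has already been absorbed into Lemmas \ref{LM}–\ref{NMLM} and Theorem \ref{path}. The only thing one has to notice is the cancellation $n(n+2) - (n+1)^2 = -1$, which is precisely what makes the rank-one correction $\tfrac{1}{n}uu'$ from Theorem \ref{path} line up with the rank-one term $\tfrac{(n+1)^2}{n}\1\1'$ in (\ref{P}) to leave a clean $aI + bJ$ Schur complement.
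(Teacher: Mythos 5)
Your proposal is correct and follows essentially the same route as the paper: substitute Lemma \ref{NMLM} into (\ref{P}), collapse via $n(n+2)-(n+1)^2=-1$ to get $P=-I-\tfrac{1}{n}J$, and invoke the Schur complement criterion. The only difference is that you spell out the spectrum of $aI+bJ$ to justify invertibility of $P$, a step the paper leaves implicit (and which is immediate since $-I-\tfrac{1}{n}J$ is negative definite).
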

\begin{proof}
Using Lemma \ref{NMLM} in (\ref{P}), we have
\begin{equation*}
    \begin{aligned}
    P &= (n+2)J-I-\frac{1}{n}(n+1)^2J\\
    &= -\frac{1}{n}J-I.
    \end{aligned}
\end{equation*}
Thus $P$ is invertible. Since 
\[\det(D_2(\mathcal{C})[X,X]) = \det(D)\det(P),\]
we conclude that $D_2(\mathcal{C})[X,X]$ is invertible. The proof is complete.
\end{proof}

\subsection{Rank of $D_2(\mathcal{C})$}
For a distinct pair of vertices $\{k,j\}$, $D_2(\mathcal{C})'[\{k,j\}]$ is the column of $D_2(\mathcal{C})$ which is indexed by $\{k,j\}$. We recall that 
\[X = \{\{0,1\}, \cdots, \{0,n\},\{1,n\},\cdots,\{n-1,n\}, \{0,n+1\},\{0,n+2\}, \dotsc, \{0,n+\sum_{j=1}^{n-1}i_{j}\}\}.\]
We claim that each column of $D_2(\mathcal{C})$ corresponding to sets in $X^c$ is a linear combination of columns of $D_2(\mathcal{C})$ corresponding to the sets in $X$. This along with Theorem \ref{caterpillardet} proves that rank of $D_2(\mathcal{C})$ is $|X|$. Before, we begin the proof, we observe the following. Suppose $\{k,j\} \in X^c$. Then, exactly one of the following holds.
\begin{enumerate}
    \item[(i)] $0< k, j < n$,
    \item[(ii)] $0< k \leq n$ and $j>n$, or
    \item[(iii)]  $k,j>n$.
\end{enumerate}
In the subsequent lemmas, we discuss the above cases separately and prove that $D_2(\mathcal{C})'[\{k,j\}]$ is a linear combination of columns of $D_2(\mathcal{C})$ corresponding to the sets in $X$.
\begin{lemma}\label{l1}
Let $\{k,j\} \in X^c$. If $0< k, j < n$, then
\[D_2(\mathcal{C})'[\{k,j\}] = D_2(\mathcal{C})'[\{0,j\}]-D_2(\mathcal{C})'[\{0,n\}]+D_2(\mathcal{C})'[(k,n)].\]
\end{lemma}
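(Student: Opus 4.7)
The plan is to verify the claimed column identity row by row: for every $2$-subset $\{p,q\}$ of $V(\mathcal{C})$, I will show
\[
d_\mathcal{C}(\{p,q,k,j\}) \;=\; d_\mathcal{C}(\{p,q,0,j\}) - d_\mathcal{C}(\{p,q,0,n\}) + d_\mathcal{C}(\{p,q,k,n\}).
\]
The main tool will be an explicit description of the Steiner tree in a caterpillar. For any vertex $v$ of $\mathcal{C}$, let $\pi(v)$ denote its projection onto $P_n$, i.e.\ $\pi(v)=v$ when $v\in P_n$ and otherwise $\pi(v)$ is the unique neighbour of $v$ on $P_n$. I claim that for any $T\subseteq V(\mathcal{C})$ with $|T|\geq 2$,
\[
d_\mathcal{C}(T) \;=\; \max_{v\in T}\pi(v)-\min_{v\in T}\pi(v)+\bigl|\{v\in T:v\notin P_n\}\bigr|,
\]
since any subtree containing $T$ must contain every projection $\pi(v)$ and hence the whole sub-path of $P_n$ stretching between $\min\pi(T)$ and $\max\pi(T)$, plus one extra edge per pendant in $T$; conversely this union is itself a tree containing $T$. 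A short justification of this formula would open the proof.

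With the formula in hand, assume without loss of generality $k<j$. Each of the four sets appearing in the identity consists of $\{p,q\}$ together with two extra vertices drawn from $\{0,k,j,n\}\subseteq P_n$, so the pendant count $|\{v\in T:v\notin P_n\}|$ is the same constant $P:=|\{v\in\{p,q\}:v\notin P_n\}|$ for all four sets; the pendant contributions on the right-hand side therefore reduce to $P - P + P = P$, matching the left-hand side. For the $(\max\pi)-(\min\pi)$ part, the hypothesis $0<k<j<n$ gives
\begin{align*}
\{p,q,k,j\}:&\quad \max(\pi(p),\pi(q),j)-\min(\pi(p),\pi(q),k),\\
\{p,q,0,j\}:&\quad \max(\pi(p),\pi(q),j)-0,\\
\{p,q,0,n\}:&\quad n-0,\\
\{p,q,k,n\}:&\quad n-\min(\pi(p),\pi(q),k),
\end{align*}
and the last three telescope via signs $+,-,+$ to the first. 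Combining this with the pendant cancellation establishes the row-wise identity, and hence the lemma.

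The only delicacy is justifying the Steiner-distance formula when $\{p,q\}$ overlaps $\{0,k,j,n\}$ or when $\pi(p)=\pi(q)$, but the formula is insensitive to both: it depends only on the set of projections and the set of pendants contained in $T$, not on any distinctness or ordering of the four labels. Beyond that I expect no real obstacle, since once the Steiner-distance formula is in place the lemma reduces to a one-line telescoping of max/min expressions.
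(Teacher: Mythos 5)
Your proof is correct, and it takes a genuinely different route from the paper. The paper proves this lemma (and the three that follow) by brute-force tabulation: for each of six sub-cases of the row index $\{p,q\}$ (three with $\{p,q\}\in X$, three with $\{p,q\}\in X^c$, further split by how many of $p,q$ are pendant), it reads the four relevant Steiner distances off a figure and checks the identity entry by entry. You instead isolate the one fact that makes all of those tables work: in a caterpillar, $d_\mathcal{C}(T)=\max_{v\in T}\pi(v)-\min_{v\in T}\pi(v)+|\{v\in T: v\notin P_n\}|$, where $\pi$ is the projection onto the central path. Your sketch of this formula is essentially complete (any connected subgraph containing a pendant $v\in T$ together with any other vertex must contain $\pi(v)$, hence all the projections, hence the sub-path of $P_n$ between the extreme projections plus one pendant edge per pendant of $T$; conversely that union is a connected subgraph containing $T$), and once it is in place the lemma reduces to the observation that the pendant counts of the four sets $\{p,q\}\cup\{k,j\}$, $\{p,q\}\cup\{0,j\}$, $\{p,q\}\cup\{0,n\}$, $\{p,q\}\cup\{k,n\}$ coincide (since $0,k,j,n$ all lie on $P_n$) and a one-line telescoping of the max/min terms, which you carry out correctly. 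Your handling of the degenerate rows (overlaps between $\{p,q\}$ and $\{0,k,j,n\}$, equal projections) is also right, since the formula depends only on the set of projections and the set of pendants in $T$. What your approach buys is uniformity and reusability: the same formula immediately disposes of Lemmas \ref{l2}--\ref{l4} and even recovers the block entries of $D$, $M$ and $N$, whereas the paper must redo the case analysis each time. If you were to write this up, the only thing to add is the two or three sentences promoting the Steiner-distance formula from a claim to a proved lemma.
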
 
\begin{proof}
Without loss of generality, we assume $k<j$. Let $\{p,q\}$ be a set of distinct vertices of $\mathcal{C}$. We discuss the cases $\{p,q\} \in X$ and $\{p,q\} \in X^c$, separately. In the following tables, the entries in each column describes the entry in the $\{p,q\}^{\rm th}$ row of $D_2(\mathcal{C})'[\{\a,\a'\}]$, for different $\a$ and $\a'$.
\begin{enumerate}
    \item For $\{p,q\} \in X$, we have the following.
    
\begin{adjustbox}{width=0.94\textwidth}
\begin{tabular}{ |c|c|c|c|c| } 
 \hline
 & & & & \\
 $\{p,q\}$ & $D_2(\mathcal{C})'[\{0,j\}]$ & $D_2(\mathcal{C})'[\{0,n\}]$ &  $D_2(\mathcal{C})'[(k,n)]$ & $D_2(\mathcal{C})'[\{k,j\}]$\\ 
 & & & & \\
 \hline
 & & & & \\
 $\{0,l\}$, $0<l\leq n$ & $max\{l,j\}$ & $n$ &  $n$ & $max\{l,j\}$\\ 
 (Fig \ref{fig1})& & & & \\
 & & & & \\
 $\{l,n\}$, $0<l< n$ & $n$ & $n$ &  $n - min\{l,k\}$ & $n - min\{l,k\}$\\ 
 (Fig \ref{fig1})& & & & \\
 & & & & \\
 $\{0,l\}$, $l>n$ & $max\{i,j\}+1$ & $n+1$ &  $n+1$ & $max\{i,j\}+1$\\
 (Fig \ref{fig3})& & & & \\
 & & & & \\
 \hline
\end{tabular}
\end{adjustbox}

\begin{figure}[!h]
\centering
\begin{tikzpicture}[shorten >=1pt, auto, node distance=3cm, ultra thick,
   node_style/.style={circle,draw=black,fill=white !20!,font=\sffamily\Large\bfseries},
   edge_style/.style={draw=black, ultra thick}]
\node[vertex] (1) at  (2,0) {$0$};
\node[vertex] (l) at  (4,0) {$l$};  
\node[vertex] (k) at  (7,0) {$k$};  
\node[vertex] (j) at  (9,0) {$j$};  
\node[vertex] (n) at  (12,0) {$n$};  
\begin{scope}[dashed]
\draw (l) to (k);
\draw (j) to (k);
\draw (n) to (j);
\draw  (l) to (1);
\end{scope}
\end{tikzpicture}
\caption{} \label{fig1}
\end{figure}

\begin{figure}[!h]
\centering
\begin{tikzpicture}[shorten >=1pt, auto, node distance=3cm, ultra thick,
   node_style/.style={circle,draw=black,fill=white !20!,font=\sffamily\Large\bfseries},
   edge_style/.style={draw=black, ultra thick}]
\node[vertex] (1) at  (2,0) {$0$};
\node[vertex] (i) at  (4,0) {$i$};  
\node[vertex] (l) at  (4,2) {$l$};  
\node[vertex] (k) at  (7,0) {$k$};  
\node[vertex] (j) at  (9,0) {$j$};  
\node[vertex] (n) at  (12,0) {$n$};  
\draw  (i) to (l);
\begin{scope}[dashed]
\draw (i) to (k);
\draw (j) to (k);
\draw (n) to (j);
\draw  (i) to (1);
\end{scope}
\end{tikzpicture}
\caption{} \label{fig3}
\end{figure}

\item Suppose $\{p,q\} \in X^c$. Without loss of generality, we assume $p<q$. Now, the following holds.

\begin{adjustbox}{width=0.94\textwidth}
\begin{tabular}{ |c|c|c|c|c| } 
 \hline
 & & & & \\
 $\{p,q\}$ & $D_2(\mathcal{C})'[\{0,j\}]$ & $D_2(\mathcal{C})'[\{0,n\}]$ &  $D_2(\mathcal{C})'[(k,n)]$ & $D_2(\mathcal{C})'[\{k,j\}]$\\ 
 & & & & \\
 \hline
 & & & & \\
 $0<p,q< n$ & $max\{q,j\}$ & $n$ &  $n-min\{p,k\}$ & $max\{q,j\}$\\ 
(Fig \ref{fig4}) & & & &$-min\{p,k\}$ \\
 & & & & \\
 $0<p\leq n$, $q>n$ & $max\{p,i,j\}+1$ & $n+1$ &  $n-min\{p,i,k\}+1$ & $max\{p,i,j\}$\\ 
(Fig \ref{fig5})  & & & &$-min\{p,i,k\}+1$ \\
 & & & & \\
 $p,q> n$ & $max\{i,j\}+2$ & $n+2$ &  $n-min\{i',k\}+2$ & $max\{i,j\}$\\ 
(Fig \ref{fig9}) & & & &$-min\{i',k\}+2$ \\
 & & & & \\
 \hline
\end{tabular}
\end{adjustbox}
\begin{figure}[!h]
\centering
\begin{tikzpicture}[shorten >=1pt, auto, node distance=3cm, ultra thick,
   node_style/.style={circle,draw=black,fill=white !20!,font=\sffamily\Large\bfseries},
   edge_style/.style={draw=black, ultra thick}]
\node[vertex] (1) at  (2,0) {$0$};
\node[vertex] (p) at  (4,0) {$p$};  
\node[vertex] (q) at  (6,0) {$q$};  
\node[vertex] (k) at  (8,0) {$k$};  
\node[vertex] (j) at  (10,0) {$j$};  
\node[vertex] (n) at  (12,0) {$n$};  
\begin{scope}[dashed]
\draw (q) to (k);
\draw (j) to (k);
\draw (n) to (j);
\draw  (p) to (1);
\draw  (p) to (q);
\end{scope}
\end{tikzpicture}
\caption{} \label{fig4}
\end{figure}       
\begin{figure}[!h]
\centering
\begin{tikzpicture}[shorten >=1pt, auto, node distance=3cm, ultra thick,
   node_style/.style={circle,draw=black,fill=white !20!,font=\sffamily\Large\bfseries},
   edge_style/.style={draw=black, ultra thick}]
\node[vertex] (1) at  (2,0) {$0$};
\node[vertex] (p) at  (4,0) {$p$};  
\node[vertex] (i) at  (6,0) {$i$};  
\node[vertex] (q) at  (6,2) {$q$};  
\node[vertex] (k) at  (8,0) {$k$};  
\node[vertex] (j) at  (10,0) {$j$};  
\node[vertex] (n) at  (12,0) {$n$};  
\draw (i) to (q);
\begin{scope}[dashed]
\draw (i) to (k);
\draw (j) to (k);
\draw (n) to (j);
\draw  (p) to (1);
\draw  (p) to (i);
\end{scope}
\end{tikzpicture}
\caption{} \label{fig5}
\end{figure}   
  \begin{figure}[!h]
\centering
\begin{tikzpicture}[shorten >=1pt, auto, node distance=3cm, ultra thick,
   node_style/.style={circle,draw=black,fill=white !20!,font=\sffamily\Large\bfseries},
   edge_style/.style={draw=black, ultra thick}]
\node[vertex] (1) at  (2,0) {$0$};
\node[vertex] (i') at  (4,0) {$i'$};  
\node[vertex] (p) at  (4,2) {$p$};  
\node[vertex] (i) at  (6,0) {$i$};  
\node[vertex] (q) at  (6,2) {$q$};  
\node[vertex] (k) at  (8,0) {$k$};  
\node[vertex] (j) at  (10,0) {$j$};  
\node[vertex] (n) at  (12,0) {$n$};  
\draw (i) to (q);
\draw (i') to (p);
\begin{scope}[dashed]
\draw (i) to (k);
\draw (j) to (k);
\draw (n) to (j);
\draw  (i') to (1);
\draw  (i') to (i);
\end{scope}
\end{tikzpicture}
\caption{} \label{fig9}
\end{figure}   
\end{enumerate}
From Case 1 and 2, we conclude that
\[D_2(\mathcal{C})'[\{k,j\}] = D_2(\mathcal{C})'[\{0,j\}]-D_2(\mathcal{C})'[\{0,n\}]+D_2(\mathcal{C})'[(k,n)].\]
The proof is complete.
\end{proof} 

\begin{lemma}\label{l2}
Let $\{k,j\} \in X^c$. Suppose $0< k \leq n$, $j>n$ and $j$ is adjacent to vertex $i$ of $P_n$. If $k<i$, then
\[D_2(\mathcal{C})'[\{k,j\}] = D_2(\mathcal{C})'[\{0,j\}]-D_2(\mathcal{C})'[\{0,n\}]+D_2(\mathcal{C})'[(k,n)].\]
\end{lemma}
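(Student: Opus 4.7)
The plan is to follow the same template as the proof of Lemma \ref{l1}: since the claim is an identity of column vectors of $D_2(\mathcal{C})$, it suffices to verify, for every $2$-subset $\{p,q\}$ of vertices, the scalar identity
\[
d_{\mathcal{C}}(\{p,q\} \cup \{k,j\}) = d_{\mathcal{C}}(\{p,q\} \cup \{0,j\}) - d_{\mathcal{C}}(\{p,q\} \cup \{0,n\}) + d_{\mathcal{C}}(\{p,q\} \cup \{k,n\}).
\]
The crucial structural fact I would exploit at the outset is that $j > n$ is pendant at $i$, and because $P_n$ is a longest central path of $\mathcal{C}$ we necessarily have $1 \leq i \leq n-1$. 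Consequently, for every set $S$ containing $j$, any connected subgraph of $\mathcal{C}$ that spans $S$ must include the edge $\{i,j\}$, which gives the pendant reduction
\[
d_{\mathcal{C}}(S) = d_{\mathcal{C}}\bigl((S \setminus \{j\}) \cup \{i\}\bigr) + 1.
\]

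Assuming first that $j \notin \{p,q\}$, applying this reduction to the $\{k,j\}$- and $\{0,j\}$-columns shows that the $\{p,q\}$-entries of $D_2(\mathcal{C})'[\{k,j\}]$ and $D_2(\mathcal{C})'[\{0,j\}]$ equal the $\{p,q\}$-entries of $D_2(\mathcal{C})'[\{k,i\}]$ and $D_2(\mathcal{C})'[\{0,i\}]$ respectively, each shifted by $+1$. Substituting into the target identity, the two shifts cancel and what remains is precisely the $\{p,q\}$-entry of Lemma \ref{l1} applied with the pair $(k,i)$, which is legitimate since $0 < k < i < n$ and hence $\{k,i\} \in X^c$. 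When $j \in \{p,q\}$, say $p=j$, the pendant reduction applies simultaneously to all four columns and the $+1$'s again cancel, leaving an identity that is Lemma \ref{l1} read off at the row $\{q,i\}$ when $q \neq i$. The degenerate sub-subcase $q=i$ must be checked directly: all four Steiner trees are determined by the path $P_n$ together with the single pendant edge $\{i,j\}$, yielding distances $i-k+1$, $i+1$, $n+1$ and $n-k+1$ respectively, and the identity is immediate.

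If one prefers to stay strictly parallel to Lemma \ref{l1}, the same argument can be recast as an explicit tabulation over the two families $\{p,q\} \in X$ (subcases $\{0,l\}$ with $l \leq n$, $\{l,n\}$ with $l < n$, and $\{0,l\}$ with $l > n$) and $\{p,q\} \in X^c$ (both internal to $P_n$, one internal and one pendant, both pendant), with the position of $p,q$ relative to $k$ and $i$ giving the case split inside each figure. I expect the bookkeeping of subcases where $\{p,q\}$ intersects $\{k,i,j,n\}$ — especially the configurations where a pendant vertex of $\{p,q\}$ shares its attachment point with $j$ — to be the main source of friction; the pendant-reduction route absorbs almost all of this automatically and leaves only the one-line $q=i$ verification to be done by hand.
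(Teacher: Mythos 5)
Your proof is correct, but it takes a genuinely different route from the paper. The paper proves Lemma \ref{l2} the same way it proves Lemma \ref{l1}: a direct tabulation, row type by row type (with accompanying figures), of the four Steiner distances $d_{\mathcal{C}}(\{p,q\}\cup\{k,j\})$, $d_{\mathcal{C}}(\{p,q\}\cup\{0,j\})$, $d_{\mathcal{C}}(\{p,q\}\cup\{0,n\})$, $d_{\mathcal{C}}(\{p,q\}\cup\{k,n\})$, followed by a termwise check of the identity. You instead derive Lemma \ref{l2} \emph{from} Lemma \ref{l1} using the pendant reduction $d_{\mathcal{C}}(S)=d_{\mathcal{C}}\bigl((S\setminus\{j\})\cup\{i\}\bigr)+1$ for any $S\ni j$ with $|S|\geq 2$, which is valid because $j$ has degree one and is attached at $i$ with $1\leq i\leq n-1$. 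The bookkeeping checks out: when $j\notin\{p,q\}$ the two $+1$'s cancel and the residual identity is exactly the $\{p,q\}$-row of Lemma \ref{l1} applied to the pair $\{k,i\}$, which lies in $X^{c}$ with $0<k<i<n$ precisely because of the hypothesis $k<i$ (this is where that hypothesis is used, and why the argument does not extend to $k\geq i$, the content of Lemma \ref{l3}); when $j\in\{p,q\}$ all four entries shift by $+1$ and the residual identity is the $\{q,i\}$-row of the same instance of Lemma \ref{l1}, with the single degenerate row $\{i,j\}$ checked by hand (your values $i-k+1$, $i+1$, $n+1$, $n-k+1$ are correct). Your approach buys economy and robustness — it eliminates the entire second round of case analysis and localizes all the geometry of the pendant vertex into one reduction identity — at the cost of depending on Lemma \ref{l1} as a black box; the paper's tabulation is self-contained and uniform in style across Lemmas \ref{l1}--\ref{l4}, but is longer and more error-prone. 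Either argument is acceptable.
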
 
\begin{proof}
Without loss of generality we assume $k<j$. We consider the following two cases.\begin{enumerate}
    \item Suppose $\{p,q\} \in X$. Then
    
\begin{adjustbox}{width=0.94\textwidth}
\begin{tabular}{ |c|c|c|c|c| } 
 \hline
 & & & & \\
 $\{p,q\}$ & $D_2(\mathcal{C})'[\{0,j\}]$ & $D_2(\mathcal{C})'[\{0,n\}]$ &  $D_2(\mathcal{C})'[(k,n)]$ & $D_2(\mathcal{C})'[\{k,j\}]$\\ 
 & & & & \\
 \hline
 & & & & \\
 $\{0,l\}$, $0<l\leq n$ & $max\{l,i\}+1$ & $n$ &  $n$ & $max\{l,i\}+1$\\ 
 (Fig \ref{fig10})& & & & \\
 & & & & \\
  $\{l,n\}$, $0<l< n$ & $n+1$ & $n$ &  $n - min\{l,k\}$ & $n - min\{l,k\}+1$\\ 
 (Fig \ref{fig10})& & & & \\
 & & & & \\
 $\{0,l\}$, $l>n$ & $max\{l',i\}+|\{l,j\}|$ & $n+1$ &  $n+1$ & $max\{l',i\}+|\{l,j\}|$\\
 (Fig \ref{fig6})& & & & \\
 & & & & \\
 \hline
\end{tabular}
\end{adjustbox}
\begin{figure}[!h]
\centering
\begin{tikzpicture}[shorten >=1pt, auto, node distance=3cm, ultra thick,
   node_style/.style={circle,draw=black,fill=white !20!,font=\sffamily\Large\bfseries},
   edge_style/.style={draw=black, ultra thick}]
\node[vertex] (1) at  (2,0) {$0$};
\node[vertex] (l) at  (4,0) {$l$};  
\node[vertex] (k) at  (7,0) {$k$};  
\node[vertex] (i) at  (9,0) {$i$};  
\node[vertex] (j) at  (9,2) {$j$};  
\node[vertex] (n) at  (12,0) {$n$};  
\draw (j) to (i);
\begin{scope}[dashed]
\draw (l) to (k);
\draw (i) to (k);
\draw (n) to (i);
\draw  (l) to (1);
\end{scope}
\end{tikzpicture}
\caption{} \label{fig10}
\end{figure}
\begin{figure}[!h]
\centering
\begin{tikzpicture}[shorten >=1pt, auto, node distance=3cm, ultra thick,
   node_style/.style={circle,draw=black,fill=white !20!,font=\sffamily\Large\bfseries},
   edge_style/.style={draw=black, ultra thick}]
\node[vertex] (1) at  (2,0) {$0$};
\node[vertex] (l') at  (4,0) {$l'$};  
\node[vertex] (l) at  (4,2) {$l$};  
\node[vertex] (k) at  (7,0) {$k$};  
\node[vertex] (i) at  (9,0) {$i$};  
\node[vertex] (j) at  (9,2) {$j$};  
\node[vertex] (n) at  (12,0) {$n$};  
\draw  (l') to (l);
\draw (j) to (i);
\begin{scope}[dashed]
\draw (l') to (k);
\draw  (l') to (1);
\draw (i) to (k);
\draw (n) to (i);
\end{scope}
\end{tikzpicture}
\caption{} \label{fig6}
\end{figure}

\item Suppose $\{p,q\} \in X^c$. Without loss of generality we assume $p<q$. Now,

\begin{adjustbox}{width=0.94\textwidth}
\begin{tabular}{ |c|c|c|c|c| } 
 \hline
 & & & & \\
 $\{p,q\}$ & $D_2(\mathcal{C})'[\{0,j\}]$ & $D_2(\mathcal{C})'[\{0,n\}]$ &  $D_2(\mathcal{C})'[(k,n)]$ & $D_2(\mathcal{C})'[\{k,j\}]$\\ 
 & & & & \\
 \hline
 & & & & \\
 $0<p,q< n$ & $max\{q,i\}$ & $n$ &  $n-min\{p,k\}$ & $max\{q,i\}$\\ 
(Fig \ref{fig7})&+1 & & &$-min\{p,k\}+1$ \\
 & & & & \\
 $0<p\leq n$, $q>n$ & $max\{p,q',i\}$ & $n+1$ &  $n-min\{p,q',k\}$ & $max\{p,q',i\}$\\ 
(Fig \ref{fig11})  &+$|\{q,j\}|$ & &$+1$ &$-min\{p,q',k\}+|\{q,j\}|$ \\
 & & & & \\
 $p,q> n$ & $max\{q',i\}$ & $n+2$ &  $n-min\{p',k\}$ & $max\{q',i\}$\\ 
 (Fig \ref{fig12})& $+|\{p,q,j\}|$& & $+2$&$-min\{p',k\}+|\{p,q,j\}|$ \\
 & & & & \\
 \hline
\end{tabular}
\end{adjustbox}
\begin{figure}[!h]
\centering
\begin{tikzpicture}[shorten >=1pt, auto, node distance=3cm, ultra thick,
   node_style/.style={circle,draw=black,fill=white !20!,font=\sffamily\Large\bfseries},
   edge_style/.style={draw=black, ultra thick}]
\node[vertex] (1) at  (2,0) {$0$};
\node[vertex] (p) at  (4,0) {$p$};  
\node[vertex] (q) at  (6,0) {$q$};  
\node[vertex] (k) at  (8,0) {$k$};  
\node[vertex] (i) at  (10,0) {$i$};  
\node[vertex] (j) at  (10,2) {$j$};  
\node[vertex] (n) at  (12,0) {$n$};  
\draw (j) to (i);
\begin{scope}[dashed]
\draw (q) to (k);
\draw (i) to (k);
\draw (n) to (i);
\draw  (p) to (1);
\draw  (p) to (q);
\end{scope}
\end{tikzpicture}
\caption{} \label{fig7}
\end{figure}      
\begin{figure}[!h]
\centering
\begin{tikzpicture}[shorten >=1pt, auto, node distance=3cm, ultra thick,
   node_style/.style={circle,draw=black,fill=white !20!,font=\sffamily\Large\bfseries},
   edge_style/.style={draw=black, ultra thick}]
\node[vertex] (1) at  (2,0) {$0$};
\node[vertex] (p) at  (4,0) {$p$};  
\node[vertex] (q') at  (6,0) {$q'$};  
\node[vertex] (q) at  (6,2) {$q$};  
\node[vertex] (k) at  (8,0) {$k$};  
\node[vertex] (i) at  (10,0) {$i$};  
\node[vertex] (j) at  (10,2) {$j$};  
\node[vertex] (n) at  (12,0) {$n$};  
\draw (j) to (i);
\draw (q') to (q);
\begin{scope}[dashed]
\draw (i) to (k);
\draw (q') to (k);
\draw (n) to (i);
\draw  (p) to (1);
\draw  (p) to (q');
\end{scope}
\end{tikzpicture}
\caption{} \label{fig11}
\end{figure}   
  \begin{figure}[!h]
\centering
\begin{tikzpicture}[shorten >=1pt, auto, node distance=3cm, ultra thick,
   node_style/.style={circle,draw=black,fill=white !20!,font=\sffamily\Large\bfseries},
   edge_style/.style={draw=black, ultra thick}]
\node[vertex] (1) at  (2,0) {$0$};
\node[vertex] (p') at  (4,0) {$p'$};  
\node[vertex] (p) at  (4,2) {$p$};  
\node[vertex] (q') at  (6,0) {$q'$};  
\node[vertex] (q) at  (6,2) {$q$};  
\node[vertex] (k) at  (8,0) {$k$};  
\node[vertex] (i) at  (10,0) {$i$};  
\node[vertex] (j) at  (10,2) {$j$};  
\node[vertex] (n) at  (12,0) {$n$};  
\draw (q') to (q);
\draw (p') to (p);
\draw (i) to (j);
\begin{scope}[dashed]
\draw (q') to (k);
\draw (i) to (k);
\draw (n) to (i);
\draw  (p') to (1);
\draw  (p') to (q');
\end{scope}
\end{tikzpicture}
\caption{} \label{fig12}
\end{figure}   
\end{enumerate}
From Case 1 and 2, we conclude that
\[D_2(\mathcal{C})'[\{k,j\}] = D_2(\mathcal{C})'[\{0,j\}]-D_2(\mathcal{C})'[\{0,n\}]+D_2(\mathcal{C})'[(k,n)].\]
This completes the proof.
\end{proof}

\begin{lemma}\label{l3}
Let $\{k,j\} \in X^c$. Suppose $0< k \leq n$, $j>n$ and $j$ is adjacent to vertex $i$ of $P_n$. If $k\geq i$, then
\begin{equation*}
    \begin{aligned}
    D_2(\mathcal{C})'[\{k,j\}] &= D_2(\mathcal{C})'[\{0,j\}]-D_2(\mathcal{C})'[\{0,n\}]+D_2(\mathcal{C})'[\{i,n\}]\\&~~~+D_2(\mathcal{C})'[\{0,k\}]-D_2(\mathcal{C})'[\{0,i\}].
    \end{aligned}
\end{equation*}
\end{lemma}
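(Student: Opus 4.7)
The plan is to verify the column identity entry by entry, in the same style as the proofs of Lemmas \ref{l1} and \ref{l2}. For any pair $\{p,q\}$ of distinct vertices of $\mathcal{C}$, the $\{p,q\}$-entry of $D_2(\mathcal{C})'[\{k,j\}]$ is the Steiner distance $d_{\mathcal{C}}(\{p,q,k,j\})$, so it suffices to establish
\[
d_{\mathcal{C}}(\{p,q,k,j\}) = d_{\mathcal{C}}(\{p,q,0,j\}) - d_{\mathcal{C}}(\{p,q,0,n\}) + d_{\mathcal{C}}(\{p,q,i,n\}) + d_{\mathcal{C}}(\{p,q,0,k\}) - d_{\mathcal{C}}(\{p,q,0,i\})
\]
for every admissible $\{p,q\}$. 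Each such Steiner distance is the edge count of the smallest subtree of $\mathcal{C}$ containing the given set, so it admits a simple closed-form expression in terms of the extreme positions on the central path $P_n$ together with the pendant-attachment points of any leaves in the set.

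The reason an extra two-term correction is needed compared with Lemma \ref{l2} is geometric: because $k \geq i$ and $j$ is the pendant at $i$, a Steiner tree containing $\{k,j\}$ together with any vertex lying to the left of $k$ must traverse the entire sub-path from that vertex all the way out to $k$ and additionally include the edge $ij$, whereas the three-term combination in Lemma \ref{l2} only accounts for reaching as far as $i$. The difference $D_2(\mathcal{C})'[\{0,k\}] - D_2(\mathcal{C})'[\{0,i\}]$ encodes precisely the extra $k-i$ edges required to extend from $i$ out to $k$, which is exactly the discrepancy that must be added.

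The verification is then a case split on $\{p,q\}$ organised as before. If $\{p,q\} \in X$, split into $\{0,l\}$ with $0 < l \leq n$, $\{l,n\}$ with $0 < l < n$, and $\{0,l\}$ with $l > n$ where $l$ is attached to some $l' \in P_n$. If $\{p,q\} \in X^c$, split into three sub-cases according to how many of $p,q$ lie in the interior of $P_n$: both interior, one interior and one pendant attached at some $q'$, or both pendants attached at some $p',q'$. In each sub-case I would tabulate the six Steiner distances using the positions of $l'$, $p'$, $q'$, $i$, $k$ relative to one another and to $n$, and then check that the alternating sum collapses to $d_{\mathcal{C}}(\{p,q,k,j\})$. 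All required distances are elementary $\max/\min$ expressions in the integer labels, so every sub-case reduces to a short algebraic identity that can be presented in a compact table, exactly as in Lemmas \ref{l1} and \ref{l2}.

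The main obstacle is combinatorial bookkeeping rather than conceptual difficulty. Particular care is needed at the boundary $k = i$ (where $j$ becomes pendant at $k$ itself, collapsing several sub-cases), and at configurations where a pendant-attachment point $l'$, $p'$, $q'$ or $i'$ coincides with $i$ or with $k$, since these alter the $\max/\min$ expressions. Once these boundary interactions are identified and each of the six cases is written out, the required five-term cancellation is routine, and together with Lemmas \ref{l1} and \ref{l2} this exhausts all elements of $X^c$, confirming that $D_2(\mathcal{C})'[\{k,j\}]$ lies in the column span of $\{D_2(\mathcal{C})'[Y] : Y \in X\}$.
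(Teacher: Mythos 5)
Your plan is essentially identical to the paper's proof: the paper verifies the identity entry by entry, splitting on $\{p,q\}\in X$ (sub-cases $\{0,l\}$ with $l\leq n$, $\{l,n\}$, and $\{0,l\}$ with $l>n$) and $\{p,q\}\in X^{c}$ (both on the path, one pendant, two pendants), and records the six Steiner distances as $\max/\min$ expressions in tables keyed to small figures, exactly as you describe. The only caveat is that those tables \emph{are} the proof --- your write-up correctly identifies every case and the reason the two extra correction terms $D_2(\mathcal{C})'[\{0,k\}]-D_2(\mathcal{C})'[\{0,i\}]$ are needed when $k\geq i$, but defers the actual tabulation and cancellation check, which is where all the work (and any risk of error at the boundary configurations you flag) resides.
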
 
\begin{proof}
Without loss of generality, we assume $k<j$. We consider the following two cases.
\begin{enumerate}
    \item Suppose $\{p,q\} \in X$. We compute the following.
    
\begin{adjustbox}{width=0.94\textwidth}
\begin{tabular}{ |c|c|c|c|c| } 
 \hline
 & & & & \\
 $\{p,q\}$ & $D_2(\mathcal{C})'[\{0,j\}]$ & $D_2(\mathcal{C})'[\{0,n\}]$ &  $D_2(\mathcal{C})'[\{i,n\}]$ & $D_2(\mathcal{C})'[\{0,k\}]$\\ 
 & & & & \\
 \hline
 & & & & \\
 $\{0,l\}$, $0<l\leq n$ & $max\{l,i\}+1$ & $n$ &  $n$ & $max\{l,k\}$\\ 
 (Fig \ref{fig13})& & & & \\
 & & & & \\
 $\{l,n\}$, $0<l< n$ & $n+1$ & $n$ &  $n-min\{l,i\}$ & $n $\\ 
 (Fig \ref{fig13})& & & & \\
 & & & & \\
 $\{0,l\}$, $l>n$ & $max\{l',i\}+|\{l,j\}|$ & $n+1$ &  $n+1$ & $max\{l',k\}+1$\\
 (Fig \ref{fig14})& & & & \\
 & & & & \\
 \hline
\end{tabular}
\end{adjustbox}
\begin{figure}[!h]
\centering
\begin{tikzpicture}[shorten >=1pt, auto, node distance=3cm, ultra thick,
   node_style/.style={circle,draw=black,fill=white !20!,font=\sffamily\Large\bfseries},
   edge_style/.style={draw=black, ultra thick}]
\node[vertex] (1) at  (2,0) {$0$};
\node[vertex] (l) at  (4,0) {$l$};  
\node[vertex] (k) at  (9,0) {$k$};  
\node[vertex] (i) at  (7,0) {$i$};  
\node[vertex] (j) at  (7,2) {$j$};  
\node[vertex] (n) at  (12,0) {$n$};  
\draw (j) to (i);
\begin{scope}[dashed]
\draw (l) to (i);
\draw (i) to (k);
\draw (n) to (k);
\draw  (l) to (1);
\end{scope}
\end{tikzpicture}
\caption{} \label{fig13}
\end{figure}
\begin{figure}[!h]
\centering
\begin{tikzpicture}[shorten >=1pt, auto, node distance=3cm, ultra thick,
   node_style/.style={circle,draw=black,fill=white !20!,font=\sffamily\Large\bfseries},
   edge_style/.style={draw=black, ultra thick}]
\node[vertex] (1) at  (2,0) {$0$};
\node[vertex] (l') at  (4,0) {$l'$};  
\node[vertex] (l) at  (4,2) {$l$};  
\node[vertex] (k) at  (9,0) {$k$};  
\node[vertex] (i) at  (7,0) {$i$};  
\node[vertex] (j) at  (7,2) {$j$};  
\node[vertex] (n) at  (12,0) {$n$};  
\draw  (l') to (l);
\draw (j) to (i);
\begin{scope}[dashed]
\draw (l') to (i);
\draw  (l') to (1);
\draw (i) to (k);
\draw (n) to (k);
\end{scope}
\end{tikzpicture}
\caption{} \label{fig14}
\end{figure}
\begin{small}
\begin{center}
\begin{tabular}{ |c|c|c| } 
 \hline
 & &  \\
 $\{p,q\}$ & $D_2(\mathcal{C})'[\{0,i\}]$ & $D_2(\mathcal{C})'[\{k,j\}]$\\ 
 & &  \\
 \hline
 & &  \\
 $\{0,l\}$, $0<l\leq n$ & $max\{l,i\}$ & $max\{l,k\}+1$\\ 
 (Fig \ref{fig13})& &  \\
 & &  \\
 $\{l,n\}$, $0<l< n$ & $n$ & $n - min\{l,i\}+1$\\ 
 (Fig \ref{fig13})& & \\
 & &  \\
 $\{0,l\}$, $l>n$ & $max\{l',i\}+1$ & $max\{l',k\}+|\{l,j\}|$\\
 (Fig \ref{fig14})& &  \\
 & &  \\
 \hline
\end{tabular}
\end{center}
\end{small} 
\item Suppose $\{p,q\} \in X^c$. Without loss of generality, we assume $p<q$. So, we have

\begin{figure}[!h]
\centering
\begin{tikzpicture}[shorten >=1pt, auto, node distance=3cm, ultra thick,
   node_style/.style={circle,draw=black,fill=white !20!,font=\sffamily\Large\bfseries},
   edge_style/.style={draw=black, ultra thick}]
\node[vertex] (1) at  (2,0) {$0$};
\node[vertex] (p) at  (4,0) {$p$};  
\node[vertex] (q) at  (6,0) {$q$};  
\node[vertex] (k) at  (10,0) {$k$};  
\node[vertex] (i) at  (8,0) {$i$};  
\node[vertex] (j) at  (8,2) {$j$};  
\node[vertex] (n) at  (12,0) {$n$};  
\draw (j) to (i);
\begin{scope}[dashed]
\draw (q) to (i);
\draw (i) to (k);
\draw (n) to (k);
\draw  (p) to (1);
\draw  (p) to (q);
\end{scope}
\end{tikzpicture}
\caption{} \label{fig15}
\end{figure}   
 \begin{adjustbox}{width=0.94\textwidth}
   \begin{tabular}{ |c|c|c|c|c| } 
 \hline
 & & & & \\
 $\{p,q\}$ & $D_2(\mathcal{C})'[\{0,j\}]$ & $D_2(\mathcal{C})'[\{0,n\}]$ &  $D_2(\mathcal{C})'[\{i,n\}]$ & $D_2(\mathcal{C})'[\{0,k\}]$\\ 
 & & & & \\
 \hline
 & & & & \\
 $0<p,q< n$ & $max\{q,i\}$ & $n$ &  $n-min\{p,i\}$ & $max\{q,k\}$\\ 
(Fig \ref{fig15})&+1 & & & \\
 & & & & \\
 $0<p\leq n$, $q>n$ & $max\{p,q',i\}$ & $n+1$ &  $n-min\{p,q',i\}$ & $max\{p,q',k\}+1$\\ 
  (Fig \ref{fig16})&+$|\{q,j\}|$ & &$+1$ & \\
 & & & & \\
 $p,q> n$& $max\{q',i\}$ & $n+2$ &  $n-min\{p',i\}$ & $max\{q',k\}+2$\\ 
 (Fig \ref{fig17})& $+|\{p,q,j\}|$& & $+2$& \\
 & & & & \\
 \hline
\end{tabular}
\end{adjustbox}
\begin{figure}[!h]
\centering
\begin{tikzpicture}[shorten >=1pt, auto, node distance=3cm, ultra thick,
   node_style/.style={circle,draw=black,fill=white !20!,font=\sffamily\Large\bfseries},
   edge_style/.style={draw=black, ultra thick}]
\node[vertex] (1) at  (2,0) {$0$};
\node[vertex] (p) at  (4,0) {$p$};  
\node[vertex] (q') at  (6,0) {$q'$};  
\node[vertex] (q) at  (6,2) {$q$};  
\node[vertex] (k) at  (10,0) {$k$};  
\node[vertex] (i) at  (8,0) {$i$};  
\node[vertex] (j) at  (8,2) {$j$};  
\node[vertex] (n) at  (12,0) {$n$};  
\draw (j) to (i);
\draw (q') to (q);
\begin{scope}[dashed]
\draw (i) to (k);
\draw (q') to (i);
\draw (n) to (k);
\draw  (p) to (1);
\draw  (p) to (q');
\end{scope}
\end{tikzpicture}
\caption{} \label{fig16}
\end{figure}   
\begin{figure}[!h]
\centering
\begin{tikzpicture}[shorten >=1pt, auto, node distance=3cm, ultra thick,
   node_style/.style={circle,draw=black,fill=white !20!,font=\sffamily\Large\bfseries},
   edge_style/.style={draw=black, ultra thick}]
\node[vertex] (1) at  (2,0) {$0$};
\node[vertex] (p') at  (4,0) {$p'$};  
\node[vertex] (p) at  (4,2) {$p$};  
\node[vertex] (q') at  (6,0) {$q'$};  
\node[vertex] (q) at  (6,2) {$q$};  
\node[vertex] (k) at  (10,0) {$k$};  
\node[vertex] (i) at  (8,0) {$i$};  
\node[vertex] (j) at  (8,2) {$j$};  
\node[vertex] (n) at  (12,0) {$n$};  
\draw (q') to (q);
\draw (p') to (p);
\draw (i) to (j);
\begin{scope}[dashed]
\draw (q') to (i);
\draw (i) to (k);
\draw (n) to (k);
\draw  (p') to (1);
\draw  (p') to (q');
\end{scope}
\end{tikzpicture}
\caption{} \label{fig17}
\end{figure}   
\begin{small}
\begin{center}
\begin{tabular}{ |c|c|c| } 
 \hline
 & &  \\
 $\{p,q\}$ & $D_2(\mathcal{C})'[\{0,i\}]$ & $D_2(\mathcal{C})'[\{k,j\}]$\\ 
 & & \\
 \hline
 & & \\
 $0<p,q< n$ & $max\{q,i\}$ & $max\{q,k\}-min\{p,i\}+1$\\ 
(Fig \ref{fig15}) &  &  \\
 & &  \\
 $0<p\leq n$, $q>n$  & $max\{p,q',i\}+1$ &  $max\{p,q',k\}-min\{p,q',i\}+|\{q,j\}|$\\ 
(Fig \ref{fig16}) & & \\
 & &  \\
 $p,q> n$& $max\{q',i\}+2$ &  $max\{q',k\}-min\{p',i\}+|\{p,q,j\}|$\\ 
(Fig \ref{fig17}) &  & \\
 & &  \\
 \hline
\end{tabular}
\end{center}
\end{small}
\end{enumerate}
    From Case 1 and 2, we conclude that
\begin{equation*}
    \begin{aligned}
    D_2(\mathcal{C})'[\{k,j\}] &= D_2(\mathcal{C})'[\{0,j\}]-D_2(\mathcal{C})'[\{0,n\}]+D_2(\mathcal{C})'[\{i,n\}]\\&~~~+D_2(\mathcal{C})'[\{0,k\}]-D_2(\mathcal{C})'[\{0,i\}].
    \end{aligned}
\end{equation*}
The proof is complete.
\end{proof}

\begin{lemma}\label{l4}
Let $\{k,j\} \in X^c$. Suppose $k,j>n$. If $j$ is adjacent to vertex $i$ of $P_n$ and $k$ is adjacent to vertex $k'$ of $P_n$, then
\begin{equation*}
    \begin{aligned}
    D_2(\mathcal{C})'[\{k,j\}] &= D_2(\mathcal{C})'[\{0,j\}]-D_2(\mathcal{C})'[\{0,n\}]+D_2(\mathcal{C})'[\{k',n\}]\\&~~~+D_2(\mathcal{C})'[\{0,k\}]-D_2(\mathcal{C})'[(0,k')].
    \end{aligned}
\end{equation*}
\end{lemma}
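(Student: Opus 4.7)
My plan is to prove the identity column-entry by column-entry, following the template used in Lemmas \ref{l1}--\ref{l3}. Since both sides are columns of $D_2(\mathcal{C})$, it suffices to show that for every pair $\{p,q\}$ of distinct vertices of $\mathcal{C}$ the corresponding entries agree, i.e.\ that
\begin{equation*}
d_\mathcal{C}(\{p,q,k,j\}) = d_\mathcal{C}(\{p,q,0,j\}) - d_\mathcal{C}(\{p,q,0,n\}) + d_\mathcal{C}(\{p,q,k',n\}) + d_\mathcal{C}(\{p,q,0,k\}) - d_\mathcal{C}(\{p,q,0,k'\}).
\end{equation*}
Because $\{k,j\}$ is an unordered pair, I may assume $k<j$; by the convention that pendants attached to $1$ are labeled first, then those attached to $2$, and so on, this is equivalent to $k' \leq i$.

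The case analysis mirrors that of Lemma \ref{l3}: split according to whether $\{p,q\} \in X$ or $\{p,q\} \in X^c$, giving six subcases. For $\{p,q\} \in X$: $\{0,l\}$ with $0 < l \leq n$, $\{l,n\}$ with $0 < l < n$, and $\{0,l\}$ with $l > n$. For $\{p,q\} \in X^c$: $0 < p,q < n$, or $0 < p \leq n$ and $q > n$, or $p,q > n$. For each subcase I would draw a schematic figure in the style of Lemma \ref{l3} and tabulate the six Steiner distances appearing in the identity, then verify numerically that the right-hand combination telescopes to the left.

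The key computational tool is the formula for Steiner distance in a caterpillar: if $\pi$ sends each pendant to its neighbour on $P_n$ and fixes each path vertex, then for any vertex set $S$,
\begin{equation*}
d_\mathcal{C}(S) = \bigl(\max \pi(S) - \min \pi(S)\bigr) + \bigl|S \cap \{n+1, n+2, \ldots\}\bigr|,
\end{equation*}
because the minimal Steiner tree spanning $S$ consists of the path segment $[\min \pi(S), \max \pi(S)]$ together with one extra edge per pendant of $S$. Plugging this formula into each of the six Steiner distances reduces every subcase to routine $\max$/$\min$ arithmetic in the parameters $p, q, k, k', j, i, n$.

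I expect the main obstacle to be organizational rather than conceptual: each of the six subcases subdivides further by the relative positions of $p, q$ with respect to $k'$ and $i$, and (when $p$ or $q$ is itself a pendant) by whether its projection under $\pi$ coincides with $k'$ or $i$. Compared with Lemma \ref{l3}, the pendant-pendant nature of $\{k,j\}$ introduces an extra layer of configurations: the right-hand side combines two pendant-to-path reductions, one replacing $k$ by its attachment $k'$ via the correction $D_2(\mathcal{C})'[\{0,k\}] - D_2(\mathcal{C})'[\{0,k'\}]$, and one replacing $j$ by the base point $0$ modulated by the path term $D_2(\mathcal{C})'[\{k',n\}] - D_2(\mathcal{C})'[\{0,n\}]$. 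Once all six subcases and their internal configurations are tabulated and verified using the displayed Steiner-distance formula, combining them yields the claimed column identity.
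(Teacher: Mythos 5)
Your skeleton is the paper's: verify the column identity entry by entry over the row index $\{p,q\}$, splitting into the same six subcases ($\{p,q\}\in X$ in three shapes, $\{p,q\}\in X^c$ in three shapes), which the paper executes by drawing a figure for each configuration and tabulating the six Steiner distances. What you add, and what the paper never states, is the closed-form expression $d_{\mathcal{C}}(S)=\bigl(\max\pi(S)-\min\pi(S)\bigr)+|S\cap\mathrm{Pen}|$, where $\mathrm{Pen}$ denotes the pendant vertices off the central path; this formula is correct (the optimal tree is the segment of $P_n$ spanning the projections plus one edge per pendant of $S$) and is in fact strong enough to make your six-way case split unnecessary. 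Indeed, writing $a=\min\pi(\{p,q\})$ and $b=\max\pi(\{p,q\})$, the five span contributions on the right-hand side telescope to $\max\{b,i\}-n+(n-\min\{a,k'\})+\max\{b,k'\}-\max\{b,k'\}=\max\{b,i\}-\min\{a,k'\}$, which equals the span of $\{p,q,k,j\}$ precisely because $k'\le i$ (guaranteed by your normalization $k<j$ and the labelling convention); and the pendant counts combine by inclusion--exclusion as $|(\{p,q\}\cap\mathrm{Pen})\cup\{j\}|+|(\{p,q\}\cap\mathrm{Pen})\cup\{k\}|-|\{p,q\}\cap\mathrm{Pen}|=|(\{p,q\}\cap\mathrm{Pen})\cup\{j,k\}|$, which is exactly the paper's ad hoc observations $|\{l,j\}|+|\{l,k\}|-1=|\{l,j,k\}|$ and $|\{p,q,j\}|+|\{p,q,k\}|-2=|\{p,q,j,k\}|$. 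So your tool buys a uniform two-line verification where the paper needs six tables and eight figures; what your writeup still owes is actually performing this substitution (you describe the tabulation but do not carry it out) and a one-line justification of the displayed Steiner-distance formula, neither of which presents any difficulty.
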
 
\begin{proof}
Without loss of generality, we assume $k<j$. We consider the following two cases.
\begin{enumerate}
    \item If $\{p,q\} \in X$, then
    
\begin{adjustbox}{width=0.95\textwidth}
\begin{tabular}{ |c|c|c|c|c| } 
 \hline
 & & & & \\
 $\{p,q\}$ & $D_2(\mathcal{C})'[\{0,j\}]$ & $D_2(\mathcal{C})'[\{0,n\}]$ &  $D_2(\mathcal{C})'[\{k',n\}]$ & $D_2(\mathcal{C})'[\{0,k\}]$\\ 
 & & & & \\
 \hline
 & & & & \\
 $\{0,l\}$, $0<l\leq n$ & $max\{l,i\}+1$ & $n$ &  $n$ & $max\{l,k'\}+1$\\ 
 (Fig \ref{fig18})& & & & \\
 & & & & \\
 $\{l,n\}$, $0<l< n$ & $n+1$ & $n$ &  $n-min\{l,k'\}$ & $n+1 $\\
 (Fig \ref{fig18})& & & & \\
 
 & & & & \\
 $\{0,l\}$, $l>n$ & $max\{l',i\}+|\{l,j\}|$ & $n+1$ &  $n+1$ & $max\{l',k'\}+|\{l,k\}|$\\
 (Fig \ref{fig19})& & & & \\
 & & & & \\
 \hline
\end{tabular}
\end{adjustbox}
\begin{figure}[!h]
\centering
\begin{tikzpicture}[shorten >=1pt, auto, node distance=3cm, ultra thick,
   node_style/.style={circle,draw=black,fill=white !20!,font=\sffamily\Large\bfseries},
   edge_style/.style={draw=black, ultra thick}]
\node[vertex] (1) at  (2,0) {$0$};
\node[vertex] (l) at  (4,0) {$l$};  
\node[vertex] (k') at  (7,0) {$k'$};  
\node[vertex] (k) at  (7,2) {$k$};  
\node[vertex] (i) at  (9,0) {$i$};  
\node[vertex] (j) at  (9,2) {$j$};  
\node[vertex] (n) at  (12,0) {$n$};  
\draw (j) to (i);
\draw (k') to (k);
\begin{scope}[dashed]
\draw (l) to (k');
\draw (i) to (k');
\draw (n) to (i);
\draw  (l) to (1);
\end{scope}
\end{tikzpicture}
\caption{} \label{fig18}
\end{figure}
\begin{figure}[!h]
\centering
\begin{tikzpicture}[shorten >=1pt, auto, node distance=3cm, ultra thick,
   node_style/.style={circle,draw=black,fill=white !20!,font=\sffamily\Large\bfseries},
   edge_style/.style={draw=black, ultra thick}]
\node[vertex] (1) at  (2,0) {$0$};
\node[vertex] (l') at  (4,0) {$l'$};  
\node[vertex] (l) at  (4,2) {$l$};  
\node[vertex] (k') at  (7,0) {$k'$};  
\node[vertex] (k) at  (7,2) {$k$};  
\node[vertex] (i) at  (9,0) {$i$};  
\node[vertex] (j) at  (9,2) {$j$};  
\node[vertex] (n) at  (12,0) {$n$};  
\draw  (l') to (l);
\draw (j) to (i);
\draw (k') to (k);
\begin{scope}[dashed]
\draw (l') to (k');
\draw  (l') to (1);
\draw (i) to (k');
\draw (n) to (i);
\end{scope}
\end{tikzpicture}
\caption{} \label{fig19}
\end{figure}
\begin{small}
\begin{center}
\begin{tabular}{ |c|c|c| } 
 \hline
 & &  \\
 $\{p,q\}$ & $D_2(\mathcal{C})'[(0,k')]$ & $D_2(\mathcal{C})'[\{k,j\}]$\\ 
 & &  \\
 \hline
 & &  \\
 $\{0,l\}$, $0<l\leq n$ & $max\{l,k'\}$ & $max\{l,i\}+2$\\ 
 (Fig \ref{fig18})& &  \\
 & &  \\
 $\{l,n\}$, $0<l< n$ & $n$ & $n - min\{l,k'\}+2$\\ 
 (Fig \ref{fig18})& & \\
 & &  \\
 $\{0,l\}$, $l>n$ & $max\{l',k'\}+1$ & $max\{l',i\}+|\{l,j,k\}|$\\
 (Fig \ref{fig19})& &  \\
 & &  \\
 \hline
\end{tabular}
\end{center}
\end{small} 
Here, we observe the fact that $|\{l,j\}|+|\{l,k\}|-1=|\{l,j,k\}|$.
\item Suppose $\{p,q\} \in X^c$. Without loss of generality, we assume $p<q$. Now, we have

\begin{figure}[!h]
\centering
\begin{tikzpicture}[shorten >=1pt, auto, node distance=3cm, ultra thick,
   node_style/.style={circle,draw=black,fill=white !20!,font=\sffamily\Large\bfseries},
   edge_style/.style={draw=black, ultra thick}]
\node[vertex] (1) at  (2,0) {$0$};
\node[vertex] (p) at  (4,0) {$p$};  
\node[vertex] (q) at  (6,0) {$q$};  
\node[vertex] (k') at  (8,0) {$k'$};  
\node[vertex] (k) at  (8,2) {$k$};  
\node[vertex] (i) at  (10,0) {$i$};  
\node[vertex] (j) at  (10,2) {$j$};  
\node[vertex] (n) at  (12,0) {$n$};  
\draw (j) to (i);
\draw (k') to (k);
\begin{scope}[dashed]
\draw (q) to (k');
\draw (i) to (k');
\draw (n) to (i);
\draw  (p) to (1);
\draw  (p) to (q);
\end{scope}
\end{tikzpicture}
\caption{} \label{fig20}
\end{figure}      
\begin{figure}[!h]
\centering
\begin{tikzpicture}[shorten >=1pt, auto, node distance=3cm, ultra thick,
   node_style/.style={circle,draw=black,fill=white !20!,font=\sffamily\Large\bfseries},
   edge_style/.style={draw=black, ultra thick}]
\node[vertex] (1) at  (2,0) {$0$};
\node[vertex] (p) at  (4,0) {$p$};  
\node[vertex] (q') at  (6,0) {$q'$};  
\node[vertex] (q) at  (6,2) {$q$};  
\node[vertex] (k') at  (8,0) {$k'$};  
\node[vertex] (k) at  (8,2) {$k$};  
\node[vertex] (i) at  (10,0) {$i$};  
\node[vertex] (j) at  (10,2) {$j$};  
\node[vertex] (n) at  (12,0) {$n$};  
\draw (j) to (i);
\draw (q') to (q);
\draw (k') to (k);
\begin{scope}[dashed]
\draw (i) to (k');
\draw (q') to (k');
\draw (n) to (i);
\draw  (p) to (1);
\draw  (p) to (q');
\end{scope}
\end{tikzpicture}
\caption{} \label{fig21}
\end{figure}   
\begin{adjustbox}{width=0.94\textwidth}
\begin{tabular}{ |c|c|c|c|c| } 
 \hline
 & & & & \\
 $\{p,q\}$ & $D_2(\mathcal{C})'[\{0,j\}]$ & $D_2(\mathcal{C})'[\{0,n\}]$ &  $D_2(\mathcal{C})'[\{k',n\}]$ & $D_2(\mathcal{C})'[\{0,k\}]$\\ 
 & & & & \\
 \hline
 & & & & \\
 $0<p,q< n$ & $max\{q,i\}$ & $n$ &  $n-min\{p,k'\}$ & $max\{q,k'\}+1$\\ 
 (Fig \ref{fig20})&+1 & & & \\
 & & & & \\
 $0<p\leq n$, $q>n$ & $max\{p,q',i\}$ & $n+1$ &  $n-min\{p,q',k'\}$ & $max\{p,q',k'\}$\\ 
  (Fig \ref{fig21})&+$|\{q,j\}|$ & &$+1$ &$+|\{q,k\}|$ \\
 & & & & \\
 $p,q> n$ & $max\{q',i\}$ & $n+2$ &  $n-min\{p',k'\}$ & $max\{q',k'\}$\\ 
(Fig \ref{fig22}) & $+|\{p,q,j\}|$& & $+2$& $+|\{p,q,k\}|$\\
 & & & & \\
 \hline
\end{tabular}
\end{adjustbox}
\begin{figure}[!h]
\centering
\begin{tikzpicture}[shorten >=1pt, auto, node distance=3cm, ultra thick,
   node_style/.style={circle,draw=black,fill=white !20!,font=\sffamily\Large\bfseries},
   edge_style/.style={draw=black, ultra thick}]
\node[vertex] (1) at  (2,0) {$0$};
\node[vertex] (p') at  (4,0) {$p'$};  
\node[vertex] (p) at  (4,2) {$p$};  
\node[vertex] (q') at  (6,0) {$q'$};  
\node[vertex] (q) at  (6,2) {$q$};  
\node[vertex] (k') at  (8,0) {$k'$};  
\node[vertex] (k) at  (8,2) {$k$};  
\node[vertex] (i) at  (10,0) {$i$};  
\node[vertex] (j) at  (10,2) {$j$};  
\node[vertex] (n) at  (12,0) {$n$};  
\draw (q') to (q);
\draw (p') to (p);
\draw (i) to (j);
\draw (k') to (k);
\begin{scope}[dashed]
\draw (q') to (k');
\draw (i) to (k');
\draw (n) to (i);
\draw  (p') to (1);
\draw  (p') to (q');
\end{scope}
\end{tikzpicture}
\caption{} \label{fig22}
\end{figure}   

\begin{small}
\begin{center}
\begin{tabular}{ |c|c|c| } 
 \hline
 & &  \\
 $\{p,q\}$ & $D_2(\mathcal{C})'[(0,k')]$ & $D_2(\mathcal{C})'[\{k,j\}]$\\ 
 & & \\
 \hline
 & & \\
 $0<p,q< n$ & $max\{q,k'\}$ & $max\{q,i\}-min\{p,k'\}+2$\\ 
(Fig \ref{fig20}) &  &  \\
 & &  \\
 $0<p\leq n$, $q>n$& $max\{p,q',k'\}+1$ &  $max\{p,q',i\}-min\{p,q',k'\}+|\{q,j,k\}|$\\ 
  (Fig \ref{fig21})& & \\
 & &  \\
 $p,q> n$& $max\{q',k'\}+2$ &  $max\{q',i\}-min\{p',k'\}+|\{p,q,j,k\}|$\\ 
 (Fig \ref{fig22})&  & \\
 & &  \\
 \hline
\end{tabular}
\end{center}
\end{small}
Here, we note that $|\{q,j\}|+|\{q,k\}|-1=|\{q,j,k\}|$ and $|\{p,q,j\}|+|\{p,q,k\}|-2=|\{p,q,j,k\}|$.
\end{enumerate}
From Case 1 and 2, we conclude that
\begin{equation*}
    \begin{aligned}
    D_2(\mathcal{C})'[\{k,j\}] &= D_2(\mathcal{C})'[\{0,j\}]-D_2(\mathcal{C})'[\{0,n\}]+D_2(\mathcal{C})'[\{k',n\}]\\&~~~+D_2(\mathcal{C})'[\{0,k\}]-D_2(\mathcal{C})'[(0,k')].
    \end{aligned}
\end{equation*}
The proof is complete.
\end{proof}
Now, we state and prove the main result of this paper. 
\begin{theorem}
Let $\mathcal{C}$ be a caterpillar graph on $N$ vertices and let  $D_2(\mathcal{C})$ be its $2-$ Steiner distance matrix. Then
\[{\rm rank}(D_2(\mathcal{C})) = 2N-p-1,\]
where $p$ denotes the number of pendant vertices of $\mathcal{C}$.
\end{theorem}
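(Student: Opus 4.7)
The plan is to combine the tools already established: Theorem~\ref{caterpillardet} provides a lower bound on the rank, and Lemmas~\ref{l1}--\ref{l4} provide the matching upper bound. The only new ingredient is a short counting argument identifying $|X|$ with the expression $2N-p-1$.

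First I would verify this counting identity. The pendant vertices of $\mathcal{C}$ consist of the two path endpoints $0$ and $n$ together with the $\sum_{j=1}^{n-1} i_j$ leaves attached to interior path vertices, so $p = 2 + \sum_{j=1}^{n-1} i_j$, while $N = n + 1 + \sum_{j=1}^{n-1} i_j$. The set $X$ decomposes as the $N-1$ pairs containing $0$ and the $n-1$ pairs $\{\alpha, n\}$ with $\alpha \in \{1, \ldots, n-1\}$, so $|X| = N + n - 2$. Substituting gives $2N - p - 1 = N + n - 2 = |X|$.

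For the lower bound, Theorem~\ref{caterpillardet} asserts invertibility of the principal submatrix $D_2(\mathcal{C})[X, X]$, so its $|X|$ columns are linearly independent inside $D_2(\mathcal{C})$; hence $\rank(D_2(\mathcal{C})) \geq |X|$. For the matching upper bound, I would observe that any pair $\{k, j\}$ with $k \neq j$ either lies in $X$ or falls into exactly one of the four cases handled by Lemmas~\ref{l1}--\ref{l4}: both endpoints strictly inside the path; one endpoint on the path and one pendant leaf (split according to whether the path-endpoint lies before or after the anchor vertex of the leaf); or both endpoints pendant leaves attached to the path. Each lemma expresses $D_2(\mathcal{C})'[\{k, j\}]$ explicitly as an integer linear combination of columns indexed by elements of $X$, so the columns of $D_2(\mathcal{C})$ indexed by $X$ already span the entire column space, giving $\rank(D_2(\mathcal{C})) \leq |X|$.

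Combining the two bounds yields $\rank(D_2(\mathcal{C})) = |X| = 2N - p - 1$. The main obstacle here is purely bookkeeping: verifying that the four lemmas really exhaust $X^c$. In particular one must check that a pair $\{n, j\}$ with $j > n$ a pendant leaf attached to an interior vertex $i$ satisfies $n \geq i$ and is therefore covered by Lemma~\ref{l3}, and that every other pair excluded from $X$ similarly lands in exactly one of the four cases. Once this exhaustiveness check is in place, the theorem follows immediately.
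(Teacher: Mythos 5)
Your proof is correct and follows essentially the same route as the paper: the lower bound comes from the invertibility of $D_2(\mathcal{C})[X,X]$ in Theorem~\ref{caterpillardet}, the upper bound from Lemmas~\ref{l1}--\ref{l4} expressing every column indexed by $X^c$ as a combination of columns indexed by $X$, and the count $|X| = 2n-1+\sum_{j=1}^{n-1} i_j = 2N-p-1$. Your explicit exhaustiveness check (e.g.\ that a pair $\{n,j\}$ with $j>n$ falls under Lemma~\ref{l3} because $n \geq i$) is a detail the paper leaves implicit, but it does not change the argument.
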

\begin{proof}
Suppose $\mathcal{C}$ is labelled as described in Figure \ref{fig_caterpillar}. Using Lemma \ref{l1}-\ref{l4}, we conclude that
\[{\rm rank}(D_2(\mathcal{C})) = 2n-1+\sum_{j=1}^{n-1} i_j,\]
Since 
\begin{equation*}
    \begin{aligned}
    2N-p-1 &= 2(n+1+\sum_{j=1}^{n-1} i_j)-(\sum_{j=1}^{n-1} i_j+2)-1 \\
    &=2n-1+\sum_{j=1}^{n-1} i_j,
    \end{aligned}
\end{equation*}
the proof is complete.
\end{proof}
We conclude this paper with the following open problem.
\vskip .2cm
\noindent \textbf{Open Problem:} If $T$ is a general tree on $n$ vertices, find $\rank(D_2(T))$. 
\section*{Acknowledgement}
The second author acknowledges the support of the Indian National Science Academy under the INSA Senior Scientist scheme.

 \bibliography{mybibfile} 

\begin{thebibliography}{1}
\expandafter\ifx\csname url\endcsname\relax
  \def\url#1{\texttt{#1}}\fi
\expandafter\ifx\csname urlprefix\endcsname\relax\def\urlprefix{URL }\fi
\expandafter\ifx\csname href\endcsname\relax
  \def\href#1#2{#2} \def\path#1{#1}\fi

\bibitem{mao2017steiner}
Y.~Mao, Steiner distance in graphs--a survey (2017).
\newblock \href {http://arxiv.org/abs/1708.05779} {\path{arXiv:1708.05779}}.

\bibitem{Graham}
R.~Graham, L.~Lovász, Distance matrix polynomials of trees, Advances in
  Mathematics 29~(1) (1978) 60--88.
\newblock \href {http://dx.doi.org/10.1016/0001-8708(78)90005-1}
  {\path{doi:10.1016/0001-8708(78)90005-1}}.

\bibitem{bapat_kirk}
R.~Bapat, S.~Kirkland, M.~Neumann, On distance matrices and laplacians, Linear
  Algebra and its Applications 401 (2005) 193--209.
\newblock \href {http://dx.doi.org/10.1016/j.laa.2004.05.011}
  {\path{doi:10.1016/j.laa.2004.05.011}}.

\bibitem{sivasu}
R.~Bapat, S.~Sivasubramanian, Inverse of the distance matrix of a block graph,
  Linear and Multilinear Algebra 59~(12) (2011) 1393--1397.
\newblock \href {http://dx.doi.org/10.1080/03081087.2011.557374}
  {\path{doi:10.1080/03081087.2011.557374}}.

\bibitem{BALAJI2021274}
R.~Balaji, R.~Bapat, S.~Goel, An inverse formula for the distance matrix of a
  wheel graph with an even number of vertices, Linear Algebra and its
  Applications 610 (2021) 274 -- 292.
\newblock \href {http://dx.doi.org/https://doi.org/10.1016/j.laa.2020.10.003}
  {\path{doi:https://doi.org/10.1016/j.laa.2020.10.003}}.

\bibitem{balaji2020distance}
R.~Balaji, R.~B. Bapat, S.~Goel, On distance matrices of wheel graphs with an
  odd number of vertices, Linear and Multilinear Algebra (2020) 1--32\href
  {http://dx.doi.org/10.1080/03081087.2020.1840499}
  {\path{doi:10.1080/03081087.2020.1840499}}.

\bibitem{GOEL202186}
S.~Goel, On distance matrices of helm graphs obtained from wheel graphs with an
  even number of vertices, Linear Algebra and its Applications 621 (2021)
  86--104.
\newblock \href {http://dx.doi.org/https://doi.org/10.1016/j.laa.2021.03.008}
  {\path{doi:https://doi.org/10.1016/j.laa.2021.03.008}}.

\end{thebibliography}
 
Ali Azimi \\
Department of Mathematics, University of Neyshabur, Neyshabur, Iran\\
E-mail address: ali.azimi61@gmail.com\\

R.B. Bapat \\
Theoretical Statisticsand Mathematics Unit, Indian Statistical Institute, Delhi, India\\E-mail address: rbb@isid.ac.in\\

Shivani Goel \\
Department of Mathematics, IISc Bangalore, Bangalore, India.\\E-mail address: shivani.goel.maths@gmail.com

\end{document}